\newcommand{\Spec}{\operatorname{Spec}}
\newcommand{\isomto}{{\stackrel{\sim}{\;\longrightarrow\;}}}
\newcommand{\isomt}{{\stackrel{{\scriptscriptstyle{\sim}}}{\;\rightarrow\;}}}
\renewcommand{\hom}{\operatorname{Hom}}
\newcommand{\Z}{{\mathbb Z}}
\newcommand{\aone}{{\mathbb A}^1}
\newcommand{\gm}[1]{{{\mathbb G}_{m}^{#1}}}
\newcommand{\et}{\text{\'et}}
\newcommand{\Nis}{{\operatorname{Nis}}}
\newcommand{\Zar}{\operatorname{Zar}} 
\newcommand{\aff}{\operatorname{aff}}
\newcommand{\Sm}{\mathrm{Sm}}
\newcommand{\Singaone}{\operatorname{Sing}^{\aone}\!\!}
\newcommand{\op}[1]{\operatorname{#1}}
\renewcommand{\setminus}{\smallsetminus}
\newcommand{\Addresses}{{
\bigskip
\footnotesize

A.~Asok, Department of Mathematics, University of Southern California, 3620 S.~Vermont Ave., Los Angeles, CA 90089-2532, United States; E-mail address: asok@usc.edu
}}
\newcounter{intro}
\theoremstyle{plain}
\newtheorem{thm}{Theorem}[subsection]
\newtheorem{lem}[thm]{Lemma}
\newtheorem{cor}[thm]{Corollary}
\newtheorem{prop}[thm]{Proposition}
\newtheorem*{claim*}{Claim} 
\newtheorem*{thm*}{Theorem}
\newtheorem*{problem*}{Problem}
\newtheorem{thmintro}{Theorem}
\theoremstyle{definition}
\theoremstyle{remark}
\newtheorem{rem}[thm]{Remark}
\newtheorem{remintro}[thmintro]{Remark}
\newtheorem{entry}[thm]{}
\numberwithin{equation}{subsection}
\begin{document}
\pagestyle{fancy}
\renewcommand{\sectionmark}[1]{\markright{\thesection\ #1}}
\fancyhead{}
\fancyhead[LO,R]{\bfseries\footnotesize\thepage}
\fancyhead[LE]{\bfseries\footnotesize\rightmark}
\fancyhead[RO]{\bfseries\footnotesize\rightmark}
\chead[]{}
\cfoot[]{}
\setlength{\headheight}{1cm}

\author{Aravind Asok\thanks{AA was partially supported by NSF Awards DMS-1802060  and DMS-2101898.}}

\title{{\bf Affine representability of quadrics revisited}}
\date{}
\maketitle

\begin{abstract}
The quadric $\operatorname{Q}_{2n}$ is the ${\mathbb Z}$-scheme defined by the equation $\sum_{i=1}^n x_i y_i = z(1-z)$.  We show that $\operatorname{Q}_{2n}$ is a homogeneous space for the split reductive group scheme $\operatorname{SO}_{2n+1}$ over ${\mathbb Z}$.  The quadric $\operatorname{Q}_{2n}$ is known to have the ${\mathbb A}^1$-homotopy type of a motivic sphere and the identification as a homogeneous space allows us to give a characteristic independent affine representability statement for motivic spheres.  This last observation allows us to give characteristic independent comparison results between Chow--Witt groups, motivic stable cohomotopy groups and Euler class groups.
\end{abstract}

\section{Introduction}
Assume $k$ is an arbitrary commutative (unital) base ring.  Consider the hypersurface $\op{Q}_{2n}$ in ${\mathbb A}^{2n+1}_k := \Spec k[x_1,\ldots,x_{2n+1}]$ cut out by the equation
\[
\sum_{i=1}^n x_i x_{n+i} = x_{2n+1}(1-x_{2n+1});
\]
the quadric hypersurface so-defined is smooth over $\Spec k$.  Let $q_{2n+1}$ be the standard split quadratic form $\sum_i x_i x_{n+i} + x_{2n+1}^2$ in $2n+1$-variables, and let us provisionally write $\op{S}_{2n}$ for the quadric hypersurface $q_{2n+1} = 1$ in ${\mathbb A}^{2n+1}_k$.  

If $2$ is invertible in $k$, then the standard action of $\op{SO}_{2n+1}$ on ${\mathbb A}^{2n+1}_k$ as isometries preserving $q_{2n+1}$ yields, upon choice of a base-point, an isomorphism between $\op{S}_{2n}$ and the homogeneous space $\op{SO}_{2n+1}/\op{SO}_{2n}$.  Since $2$ is invertible in $k$, the quadric $\op{Q}_{2n}$ is isomorphic to $\op{S}_{2n}$ and is thus itself a homogeneous space for $\op{SO}_{2n+1}$ (see, e.g., \cite[Lemma 3.1.7]{AHWII}).  If $2$ is not a unit in $k$, then the quadric $\op{S}_{2n}$ fails to be smooth over $k$ and thus is neither isomorphic to $\op{Q}_{2n}$ nor a homogeneous space for $\op{SO}_{2n+1}$.  Nevertheless, the following result shows that $\op{Q}_{2n}$ is still isomorphic to $\op{SO}_{2n+1}/\op{SO}_{2n}$.

\begin{thmintro}[See Theorem~\ref{thm:quadricsinchar2}]
\label{thmintro:main}
Assume $k$ is a commutative ring and $n \geq 1$ is an integer.
\begin{enumerate}[noitemsep,topsep=1pt]
	\item There is an action of $\op{SO}_{2n+1}$ on $\op{Q}_{2n}$ such that taking the orbit through the point $x_0 \in \op{Q}_{2n}(k)$ given by $x_{2n+1} = 1$, $x_i= 0$, $1 \leq i \leq 2n$ yields a surjective smooth morphism $\op{SO}_{2n+1} \to \op{Q}_{2n}$ that factors through an $\op{SO}_{2n+1}$-equivariant isomorphism
    \[
    \varphi: \op{SO}_{2n+1}/\op{SO}_{2n} \isomto \op{Q}_{2n}.
    \]
\item The induced $\op{SO}_{2n}$-torsor $\op{SO}_{2n+1} \to \op{Q}_{2n}$ is Zariski locally trivial.
\end{enumerate}
\end{thmintro}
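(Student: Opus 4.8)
The plan is to obtain the action by transporting the classical linear $\operatorname{SO}_{2n+1}$-action on $\operatorname{S}_{2n}=\{q_{2n+1}=1\}$ across the isomorphism $\operatorname{Q}_{2n}\cong\operatorname{S}_{2n}$ available after inverting $2$, and then to observe that the resulting formulas already make sense over $\mathbb Z$. Concretely, over $\mathbb Z[1/2]$ the map $(x_1,\dots,x_{2n},x_{2n+1})\mapsto(2x_1,\dots,2x_{2n},\,2x_{2n+1}-1)$ is an isomorphism $\operatorname{Q}_{2n}\xrightarrow{\sim}\operatorname{S}_{2n}$ sending $x_0$ to the last standard basis vector $e_{2n+1}\in\operatorname{S}_{2n}$, and the transported action of $g\in\operatorname{SO}_{2n+1}$ carries $(x,z)$ to the tuple read off from $g\cdot(2x_1,\dots,2x_{2n},2z-1)\in\operatorname{S}_{2n}$ by halving the first $2n$ coordinates and solving the last for $z$. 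The first point to check is that this is defined over $\mathbb Z$, and it is: since $\operatorname{SO}_{2n+1}$ is the automorphism scheme of the split odd form $q_{2n+1}$, whose polar form has one-dimensional radical spanned by $e_{2n+1}$ in characteristic $2$, every $\overline{\mathbb F}_2$-point of $\operatorname{SO}_{2n+1}$ fixes $e_{2n+1}$ (it scales it by a square root of $1$); as $\operatorname{SO}_{2n+1,\mathbb F_2}$ is reduced and $\mathcal O(\operatorname{SO}_{2n+1})$ is $\mathbb Z$-flat, the first $2n$ coordinates of $g\cdot e_{2n+1}$ then lie in $2\,\mathcal O(\operatorname{SO}_{2n+1})$ and its last coordinate lies in $1+2\,\mathcal O(\operatorname{SO}_{2n+1})$ — precisely the divisibilities needed. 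That these formulas define a genuine action $\operatorname{SO}_{2n+1}\times\operatorname{Q}_{2n}\to\operatorname{Q}_{2n}$ landing in $\operatorname{Q}_{2n}$ then follows from $\mathbb Z$-flatness of all the schemes involved, the relations being forced by their validity over $\mathbb Z[1/2]$; note also that the affine map $\operatorname{Q}_{2n}\to\operatorname{S}_{2n}$ above is $\operatorname{SO}_{2n+1}$-equivariant.

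With the action in hand, I would study the orbit morphism $\psi\colon\operatorname{SO}_{2n+1}\to\operatorname{Q}_{2n}$, $g\mapsto g\cdot x_0$, whose value at $g$ has $x_i$-coordinate $\tfrac12(g\cdot e_{2n+1})_i$ for $i\le 2n$ and $z$-coordinate $\tfrac12((g\cdot e_{2n+1})_{2n+1}+1)$. Over $\mathbb Z[1/2]$ this is, via the isomorphism above, the classical orbit map: smooth, surjective, with stabilizer the standard $\operatorname{SO}_{2n}$; this is essentially \cite[Lemma 3.1.7]{AHWII}. Over $\mathbb F_2$ the usual argument (take the orthogonal complement of a non-degenerate vector) is unavailable, so I would instead compute $\mathrm d\psi$ at the identity. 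Using the description of $\operatorname{Lie}(\operatorname{SO}_{2n+1})$ over $\mathbb Z$ — in which the last column of an element has its first $2n$ entries divisible by $2$ and its last entry equal to $0$ — one sees that the $z$-coordinate of $\psi$ vanishes to second order at the identity (consistently with the fact that $\mathrm d z$ vanishes on $T_{x_0}\operatorname{Q}_{2n}$), while the $x$-coordinates of $\mathrm d\psi_e$ surject onto $T_{x_0}\operatorname{Q}_{2n}$, recovering the "short-root" parameters of a Lie-algebra element. Hence $\mathrm d\psi_e$ is surjective, and by $\operatorname{SO}_{2n+1}$-equivariance together with transitivity of left translation, $\psi_{\mathbb F_2}$ is smooth everywhere; the same holds in every other characteristic. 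Since $\operatorname{SO}_{2n+1}$ and $\operatorname{Q}_{2n}$ are smooth over $\mathbb Z$ and $\psi$ is fibrewise smooth over $\operatorname{Spec}\mathbb Z$, the morphism $\psi$ is smooth. It follows that $\operatorname{Stab}_{\operatorname{SO}_{2n+1}}(x_0)=\psi^{-1}(x_0)$ is a smooth closed subgroup scheme of relative dimension $\dim\operatorname{SO}_{2n}$; it contains the standard $\operatorname{SO}_{2n}$ and agrees with it after inverting $2$, so, $\mathcal O(\operatorname{SO}_{2n+1})$ being torsion-free, the two coincide.

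For surjectivity of $\psi$ and for part (2) I would produce sections of $\psi$ over an explicit affine open cover of $\operatorname{Q}_{2n}$. Over $\mathbb Z[1/2]$ these come from the known Zariski-local triviality of the torsor $\operatorname{SO}_{2n+1}\to\operatorname{S}_{2n}$ (again \cite[Lemma 3.1.7]{AHWII}): on each distinguished open one writes down, by an Eichler-transformation / hyperbolic-plane construction depending morphically on $(x,z)$, an isometry of $q_{2n+1}$ carrying $e_{2n+1}$ to the vector $(2x_1,\dots,2x_{2n},2z-1)$. I would then check that, expressed in the coordinates of $\operatorname{Q}_{2n}$, the entries of this isometry are already regular over $\mathbb Z$: the denominators that occur are powers of the coordinate functions that cut out the open set, never powers of $2$, again because of the characteristic-$2$ fixed-vector phenomenon. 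Such sections over a cover show at once that $\psi$ is surjective (completing part (1)), that $\operatorname{SO}_{2n+1}\to\operatorname{Q}_{2n}$ is a Zariski-locally trivial $\operatorname{SO}_{2n}$-torsor — so $\operatorname{Q}_{2n}$ represents $\operatorname{SO}_{2n+1}/\operatorname{SO}_{2n}$ and $\psi$ is the quotient map, yielding the equivariant isomorphism $\varphi$ — and (redundantly) that $\psi$ is smooth, since torsors under smooth group schemes are smooth.

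The crux throughout is characteristic $2$. One must see that the characteristic-zero formulas, both for the action and for the local sections, survive the passage to $\mathbb Z$; this rests entirely on the coincidence that the split odd orthogonal group in characteristic $2$ fixes the distinguished anisotropic vector $e_{2n+1}$, which forces the relevant matrix entries to be even. And one must re-establish the homogeneity of $\operatorname{Q}_{2n}$ in characteristic $2$ — both smoothness of the orbit map and the identification of its stabilizer with $\operatorname{SO}_{2n}$ — without the bilinear-form manipulations that are available when $2$ is invertible; the differential computation at the identity and the explicit local sections are the devices that accomplish this.
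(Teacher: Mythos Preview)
Your route differs from the paper's at almost every step, and most of it works.  The paper realizes $\op{Q}_{2n}$ inside ${\mathbb A}^{2n+2}$ as $\{q_{2n+2}=0,\ t_{2n+2}=1\}$ and lets $\op{SO}_{2n+1}$ act as the stabilizer of $1$ in $\op{GSO}_{2n+2}$; it then proves transitivity on geometric points by explicit reflections (noting that $\op{SO}_{2n+1}(k)=\op{O}_{2n+1}(k)$ in characteristic $2$), invokes a fiberwise homogeneous-space criterion, pins down the stabilizer by squeezing $\op{SO}_{2n}\subset\op{Stab}\subset\op{O}_{2n}$ and ruling out $\op{O}_{2n}$ via the vanishing of $H^1_{\et}(\op{Q}_{2n},\Z/2)$, and finally gets Zariski local triviality from abstract Witt cancellation over local rings.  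Your descent-from-$\Z[1/2]$ construction of the action is correct: the observation that $\op{SO}_{2n+1}(\overline{\mathbb F}_2)$ fixes $e_{2n+1}$, combined with reducedness of the special fiber and $\Z$-flatness, does force $g_{i,2n+1}\in 2\,\mathcal O(\op{SO}_{2n+1})$ for $i\leq 2n$ and $g_{2n+1,2n+1}\in 1+2\,\mathcal O(\op{SO}_{2n+1})$, which is exactly what the transported formulas need.  Your Lie-algebra computation is also correct (the condition $A^{T}M+MA=0$ with $M$ the Gram matrix of $B_{2n+1}$ forces the last column of $A$ to be $(-2x,-2w,0)$), so $\mathrm d\psi_e$ is surjective and $\psi$ is fiberwise smooth; and once $\psi$ is $\Z$-smooth your flatness argument for $\op{Stab}=\op{SO}_{2n}$ is clean.

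The genuine gap is surjectivity in characteristic $2$, on which your explicit-sections argument for both surjectivity and Zariski local triviality rests.  Smoothness of $\psi_{\overline{\mathbb F}_2}$ only shows the orbit of $x_0$ is open of full dimension in $\op{Q}_{2n}$; it does not exclude a nonempty closed $\op{SO}_{2n+1}$-stable complement.  You propose to close this by checking that Eichler-transformation sections written over $\Z[1/2]$ already have entries in $\mathcal O(U)$, but you do not carry out the check, and it is not automatic: for $n=1$, naively completing the column $(2x,2y,2z-1)$ to a matrix in $\op{SO}_3\bigl(\mathcal O(D(x))\bigr)$ by solving the isotropy and orthogonality constraints produces entries such as $z/x$ or $(z-1)/x$, which do not lie in $\mathcal O(D(x))$.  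The ``fixed-vector phenomenon'' that rescued the action formulas constrains only the last column of elements of $\op{SO}_{2n+1}$; it says nothing about the remaining columns you must choose.  To repair your argument you must either exhibit integral sections explicitly over a genuine open cover (this may be possible, but the cover and the formulas will need care), or else supply an independent transitivity proof over $\overline{\mathbb F}_2$ (reflections orthogonal to the basepoint, or a finite-field point count, as the paper does) and then invoke Witt cancellation over local rings for Zariski local triviality.
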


While the action of $\op{SO}_{2n+1}$ on $\op{S}_{2n}$ is classical, the action of $\op{SO}_{2n+1}$ on $\op{Q}_{2n}$ is slightly less transparent.  Once we have described the action, a sequence of standard algebro-geometric results about actions of group-schemes reduces the proof of Theorem~\ref{thmintro:main} to an elementary analysis of transitivity of actions of the special orthogonal groups in the spirit of \cite[II.10-11]{Dieudonne}.  Our main interest in the above result is due to its numerous concrete consequences, which stem from the fact that $\op{Q}_{2n}$ is ``well-behaved" from the standpoint of $\aone$-homotopy theory, for example the following result holds.

\begin{thmintro}[See Corollary~\ref{cor:affinerepresentability}]
	\label{thmintro:aonenaive}
	If $k$ is a field and $n \geq 1$ is an integer, then the scheme $\op{Q}_{2n}$ is $\aone$-naive; in particular, for any smooth affine $k$-scheme $X$, there is a canonical bijection
	\[
	\pi_0(\Singaone \op{Q}_{2n}(X)) \isomto [X,\op{Q}_{2n}]_{\aone},
	\]
	i.e., naive $\aone$-homotopy classes of maps coincide with ``true" $\aone$-homotopy classes of maps.
\end{thmintro}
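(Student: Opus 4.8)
The plan is to combine the geometric input of Theorem~\ref{thmintro:main} with the affine representability formalism of Asok--Hoyois--Wendt \cite{AHWII} (and its precursors). Recall that to show $\op{Q}_{2n}$ is $\aone$-naive it suffices to check that the singular presheaf $\Singaone\op{Q}_{2n}$ satisfies affine Nisnevich excision, i.e.\ carries elementary distinguished squares of smooth affine $k$-schemes to homotopy Cartesian squares: since $\Singaone\op{Q}_{2n}$ is automatically $\aone$-invariant on affines, affine Nisnevich excision forces the canonical map $\Singaone\op{Q}_{2n}(U)\to\Laone\op{Q}_{2n}(U)$ to be a weak equivalence for every smooth affine $k$-scheme $U$, and passing to $\pi_0$ then yields the asserted bijection $\pi_0(\Singaone\op{Q}_{2n}(U))\isomto[U,\op{Q}_{2n}]_{\aone}$.

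To establish this excision property I would feed Theorem~\ref{thmintro:main} into the homogeneous-space machinery of \cite{AHWII}. By that theorem, $\op{Q}_{2n}\cong\op{SO}_{2n+1}/\op{SO}_{2n}$ and the associated $\op{SO}_{2n}$-torsor $\op{SO}_{2n+1}\to\op{Q}_{2n}$ is Zariski-locally trivial; moreover $\op{SO}_{2n}$ and $\op{SO}_{2n+1}$ are split reductive $k$-groups (base-changed from $\Z$), hence isotropic over $k$, so their classifying spaces satisfy affine representability and their singular presheaves satisfy affine Nisnevich excision. One then propagates affine Nisnevich excision from $\Singaone\op{SO}_{2n+1}$ and $\Singaone\op{SO}_{2n}$ to $\Singaone\op{Q}_{2n}$ through the fibration sequence $\op{SO}_{2n}\to\op{SO}_{2n+1}\to\op{Q}_{2n}$---exactly as in the treatment of homogeneous spaces $G/H$ with $G\to G/H$ Zariski-locally trivial---the point being that Zariski-local triviality of the torsor is precisely what makes the comparison of spaces of sections over an elementary distinguished square of affines go through (the relevant $\op{SO}_{2n}$-torsors become trivial after Zariski-localization on the schemes in the square). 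This gives that $\op{Q}_{2n}$ is $\aone$-naive.

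The point that genuinely requires care---and which is the real content beyond Theorem~\ref{thmintro:main}---is characteristic-independence: one needs the affine representability inputs for $\op{SO}_{2n}$ and $\op{SO}_{2n+1}$ over an \emph{arbitrary} field $k$, in particular when $k$ has characteristic $2$. Since these groups are split reductive and defined over $\Z$, none of the arguments invoked above require $2$ to be invertible, so the inputs are available as stated. When $2$ is a unit the conclusion is already known, via the isomorphism $\op{Q}_{2n}\cong\op{S}_{2n}$ and earlier work; the genuinely new case is characteristic $2$, now reachable precisely because part (1) of Theorem~\ref{thmintro:main} supplies the homogeneous-space description there. I expect the main obstacle to be organizational rather than conceptual: running the excision argument through the fibration sequence while carefully tracking base-points and the Zariski-local trivializations of the $\op{SO}_{2n}$-torsor over the members of a distinguished square.
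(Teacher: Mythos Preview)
Your proposal is correct and takes essentially the same route as the paper: feed Theorem~\ref{thmintro:main} into the Asok--Hoyois--Wendt homogeneous-space machinery for $G/H$ with $G\to G/H$ Zariski locally trivial. The paper's proof is the one-liner ``combine Theorem~\ref{thm:quadricsinchar2} and \cite[Theorem~2.6]{AHWIII}''; the only refinement worth noting is that the relevant black box lives in \cite{AHWIII} rather than \cite{AHWII}, since the latter's results assume $k$ infinite while \cite{AHWIII} removes that restriction---so your sketch covers infinite fields as written and needs the \cite{AHWIII} upgrade to handle finite $k$ (a point orthogonal to the characteristic-$2$ issue you flag).
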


\begin{remintro}
The notion of an $\aone$-naive space was introduced in \cite[Definition 2.1.1]{AHWII}, and the final statement follows directly from the preceding statement by appeal to general results.  If $2$ is invertible in $k$, then Theorem~\ref{thmintro:main} is the conjunction of \cite[Lemma 3.1.7, Theorem 4.2.2]{AHWII} if $k$ is infinite and \cite[Theorem 2.15]{AHWIII} if $k$ is finite.  If $2$ is not assumed invertible in $k$, then Theorem~\ref{thmintro:main} was known when $n \leq 3$ by appeal to various low-dimensional exceptional isomorphisms: $\op{Q}_2 \cong \op{SL}_2/\gm{}$, $\op{Q}_4 \cong \op{Sp}_4/(\op{Sp}_2 \times \op{Sp}_2)$ (again \cite[Theorem 4.2.2]{AHWII}), and $\op{Q}_6 \cong \op{G}_2/\op{SL}_3$ (see \cite[Theorem 2.3.5]{AHWOctonion}).  The case $n = 4$ can be analyzed by using an interpretation of $\op{Q}_8$ as the octonionic projective line. 
\end{remintro}

The variety $\op{Q}_{2n}$ arises naturally in the theory of complete intersection ideals \cite{MohanKumar} and provides a smooth affine model of the motivic sphere $\op{S}^{2n,n}$ in the Morel--Voevodsky $\aone$-homotopy category (see \cite[p. 111]{MV} for discussion of motivic spheres, and \cite[Theorem 2]{AsokDoranFasel} for a precise statement).  In \cite{AsokDasFasel2021}, these points of view were united to establish links between Bhatwadekar--Sridharan Euler class groups (after M. Nori) \cite{BS}, motivic stable cohomotopy groups, and Chow--Witt groups (as introduced by Barge--Morel \cite{BargeMorel}); we refer the reader to \cite{AsokDasFasel2021} for a more complete collection of references in this direction.   Theorem~\ref{thmintro:aonenaive} allows us to weaken the hypotheses in the main result of \cite[Theorem 1]{AsokDasFasel2021}, which we restate here for convenience (though we refer the reader to \cite{AsokDasFasel2021} for the relevant notation). 

\begin{thmintro}[Asok, Fasel]
	\label{thmintro:cohomotopy}
	Suppose $k$ is a field, $n$ and $d$ are integers, $n \geq 2$, and $X$ is a smooth affine $k$-scheme of dimension $d \leq 2n-2$.  
	\begin{enumerate}[noitemsep,topsep=1pt]
		\item The motivic cohomotopy set $[X,\op{Q}_{2n}]_{\aone}$ has a functorial abelian group structure, and 
		\item there is a functorial ``Hurewicz" homomorphism $[X,\op{Q}_{2n}]_{\aone} \to \widetilde{CH}^n(X)$, which is an isomorphism if $d \leq n$.
		\item If, furthermore, $k$ is infinite, then there is a functorial and surjective ``Segre class" homomorphism $s: \mathrm{E}^n(X) \to [X,\op{Q}_{2n}]_{\aone}$ from the Bhatwadekar--Sridharan Euler class groups to motivic cohomotopy;
		\item if $d \geq 2$, then the morphism $s$ is an isomorphism.
	\end{enumerate}
	In particular, under the hypotheses in \textup{Point (4)}, if $X$ is a smooth affine $k$-scheme of dimension $d$, then there is a functorial isomorphism
	\[
	\mathrm{E}^d(X) \isomto \widetilde{CH}^d(X).
	\]
\end{thmintro}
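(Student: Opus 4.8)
The plan is to reprise the proof of \cite[Theorem~1]{AsokDasFasel2021} essentially verbatim. Inspecting that argument, the only place a hypothesis on the characteristic of $k$ is used (beyond the standing assumption that $k$ is a field, taken infinite for Points~(3)--(4)) is the input that the quadric $\op{Q}_{2n}$ is $\aone$-naive, i.e.\ that $\pi_0(\Singaone \op{Q}_{2n}(X)) \isomto [X,\op{Q}_{2n}]_{\aone}$ for every smooth affine $X$; in \cite{AsokDasFasel2021} this was available only when $2$ is invertible in $k$, where it follows from \cite{AHWII,AHWIII}. Theorem~\ref{thmintro:aonenaive} furnishes precisely this identification over an arbitrary field, so the approach is to substitute it into the argument of \cite{AsokDasFasel2021} and to check that every remaining ingredient is insensitive to $\op{char}(k)$.

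Concretely, I would organize the steps as follows. Since $\op{Q}_{2n}$ is a smooth affine model of the motivic sphere $\op{S}^{2n,n}$ (\cite[Theorem~2]{AsokDoranFasel}), one has $[X,\op{Q}_{2n}]_{\aone} \cong [X,\op{S}^{2n,n}]_{\aone}$, and $\op{S}^{2n,n}$ is $(n-1)$-$\aone$-connected with first non-trivial $\aone$-homotopy sheaf $\piaone_n(\op{S}^{2n,n}) \cong \KMW_n$ (Morel). For \emph{Point~(1)}, the metastable-range $\aone$-Freudenthal suspension theorem (equivalently, that the unit map $\op{S}^{2n,n} \to \Omega^\infty\Sigma^\infty \op{S}^{2n,n}$ is $(2n-1)$-connected) shows that for $X$ of dimension $d \leq 2n-2$ the stabilization map $[X,\op{Q}_{2n}]_{\aone} \to \bpi^{2n,n}_{s}(X)$ to motivic stable cohomotopy is a bijection; transporting the abelian group structure on the target --- and noting that every map involved is functorial in $X$ --- yields (1). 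For \emph{Point~(2)}, Morel's identification of $\piaone_n(\op{S}^{2n,n})$ realizes the bottom Postnikov stage of $\op{S}^{2n,n}$ as $K(\KMW_n,n)$, giving a Hurewicz homomorphism
\[
[X,\op{Q}_{2n}]_{\aone} \longrightarrow H^n_{\Nis}(X,\KMW_n) = \widetilde{CH}^n(X);
\]
since $X$ has Nisnevich cohomological dimension $\leq d$, the higher Postnikov stages contribute no obstructions once $d \leq n$, and the Hurewicz map is then an isomorphism. For \emph{Points~(3)--(4)}, one defines the Segre class map by sending a pair $(I,\omega_I)$ representing a class of $\mathrm{E}^n(X)$ --- the datum of $n$ generators of $I/I^2$ together with a lift of the associated surjection to $1$ --- to the naive homotopy class of the corresponding morphism $X \to \op{Q}_{2n}$, and then composing with the isomorphism $\pi_0(\Singaone \op{Q}_{2n}(X)) \isomto [X,\op{Q}_{2n}]_{\aone}$ of Theorem~\ref{thmintro:aonenaive}; surjectivity reduces to the fact that every element of $\pi_0(\Singaone \op{Q}_{2n}(X))$ is represented by an honest morphism, hence by such a pair, while injectivity for $d \geq 2$ is the assertion that the relations defining $\mathrm{E}^n(X)$ match naive $\aone$-homotopies of maps to $\op{Q}_{2n}$. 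Finally, taking $n = d$ --- so that $d \leq 2n-2$ becomes equivalent to $d \geq 2$, while $d \leq n$ holds automatically --- and composing the isomorphisms of Points~(2) and~(4) gives $\mathrm{E}^d(X) \isomto [X,\op{Q}_{2d}]_{\aone} \isomto \widetilde{CH}^d(X)$.

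The main obstacle is the verification underlying \emph{Points~(3)--(4)}: that the matching of the defining relations of $\mathrm{E}^n(X)$ with naive homotopies, together with the moving and excision arguments on smooth affine schemes invoked in \cite{AsokDasFasel2021}, go through over an arbitrary infinite field once affine representability is granted. I expect this to be a matter of careful bookkeeping rather than a genuine new difficulty, since the geometric inputs it relies on --- Zariski-local triviality of the $\op{SO}_{2n}$-torsor from Theorem~\ref{thmintro:main}, transitivity statements for generating sets of complete-intersection ideals, and $\aone$-invariance of the Milnor--Witt $K$-theory sheaves $\KMW_n$ --- are all characteristic-free; replacing the previously available affine representability statement by Theorem~\ref{thmintro:aonenaive} should therefore be the only modification required.
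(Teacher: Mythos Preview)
Your proposal is correct and matches the paper's approach exactly: the paper's proof simply cites the relevant results of \cite{AsokDasFasel2021} (Proposition~1.2.1, Theorem~1.3.4, Theorem~3.1.13, Remark~3.1.14, Theorem~3.2.1) and replaces the appeal to the affine representability input there (\cite[Theorem~1.1.1]{AsokDasFasel2021}) by Corollary~\ref{cor:affinerepresentability}. Your more detailed unpacking of those cited results (Freudenthal, Postnikov truncation, construction of the Segre map) is consistent with what that reference contains, and your diagnosis that the only characteristic-sensitive ingredient is $\aone$-naivety of $\op{Q}_{2n}$ is precisely the point the paper makes.
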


The proof of Theorem~\ref{thmintro:main} was inspired by analysis of the case $n = 3$ from \cite{AHWOctonion} mentioned above, which after various algebro-geometric reductions follows from \cite[Corollary 1.7.5]{SpringerVeldkamp}.  At the heart of those reductions is an interpretation of $\op{Q}_{2n}$ in terms of conditions on traces and norms of split octonion algebras.  This paper can be viewed in a similar vein: $\op{SO}_{2n+1}$ acts transitively on vectors in a representation space satisfying suitable ``norm" and ``trace" conditions.  The algebraic structures underlying the notions of ``norm" and ``trace" we use in this paper are quadratic Jordan algebras, and $\op{Q}_{2n}$ may be interpreted as a ``projective space" for such an algebra, in a sense that we explain momentarily.  While the theory of (quadratic) Jordan algebras make no appearance in the proofs, it does suggest various avenues of generalization, so we add a few comments about this point of view here.  

Assume $(J,1,U)$ is quadratic Jordan algebra over a commutative ring $k$; for us, $J$ is a finitely generated, projective $k$-module, $1$ is a distinguished element of $J$ and $U: J \to \operatorname{End}_k(J)$ ($x \mapsto U_x$) is a quadratic map satisfying various identities (see \cite[\S 1.2 Definition 3]{Jacobson} for the general definition; the cases of interest will even be obtained by base-change from $k = \Z$).  Of particular interest will be a class of quadratic Jordan algebras attached to quadratic spaces over $k$ called a (quadratic) spin factors; see \cite[Chapter 1.7]{Jacobson}.  The special orthogonal group-schemes $\op{SO}_{i}$ act by automorphisms on quadratic spin factors, even when $2$ is not invertible in $k$.

Given any quadratic Jordan algebra $(J,1,U)$ over a commutative ring $k$, the quadratic map $U$ allows one to speak of ``projection operators".  Indeed, if one defines $x^2 := U_x 1$, then a projection operator is one that satisfies $x^2 = x$.  Also attached to a quadratic spin factor is a suitable trace function, and one may consider ``rank 1" projection operators by imposing a suitable trace condition.  Granted these identifications, the scheme $\op{Q}_{2n}$ is precisely the projective space attached to a quadratic spin factor and $\op{SO}_{2n+1}$ acts by Jordan algebra automorphisms.

While the above interpretation does not aid in any calculations, it does suggest various natural generalizations of the main result.  For example, one can study  ``octonionic projective space" $\op{OP}^2 := \op{F}_4/\op{Spin}_9$ over an arbitrary commutative ring as a suitable ``projective space" of the exceptional quadratic Jordan algebra of Hermitian $3 \times 3$-matrices over the octonions.  Once again, this variety admits a description in terms of explicit ``rank 1" projection operators.  Because of the applications of Theorem~\ref{thmintro:main} stated above, we have decided to present a proof less encumbered by additional notation and terminology, and we defer possible generalizations to projective spaces of more general quadratic Jordan algebras to future work.

\subsubsection*{Acknowledgements}
The author would like to thank Konrad Voelkel for mentioning the Jordan algebra point of view on projective spaces to him many years ago, and Bob Guralnick and Skip Garibaldi for helpful references and useful discussions.  Finally, we thank Jean Fasel for many discussions around and corrections on preliminary versions of this note and Elden Elmanto for helpful comments.

\section{Orthogonal group actions on split quadrics}
Throughout this section we will assume that $k$ is an arbitary commutative base ring.  Section~\ref{ss:qspacesorthogonalgroups} recalls some basic facts about orthogonal groups in this generality.  Section~\ref{ss:orthogonalgroupactions} studies various properties of actions of reductive group schemes, and allows us to give a characteristic independent description of the action of $\op{SO}_{2n+1}$ on $\op{Q}_{2n}$.  Finally, Theorem~\ref{thmintro:main} is established in Section~\ref{ss:proofofmaintheorem}.  

\subsection{Quadratic spaces and orthogonal groups}
\label{ss:qspacesorthogonalgroups}
We begin by recalling some facts about orthogonal groups over $k$, in particular, we will not assume $2$ is invertible in $k$.  Assume $(V,q)$ is a quadratic space over $k$; we will always assume that $V$ is a projective $k$-module of constant rank, and that either $(V,q)$ is regular (a.k.a. non-singular) or, if $V$ has odd rank, semi-regular (see \cite[I.3.2]{Knus} for the former and \cite[\S IV.3.1]{Knus} for the latter). 

\subsubsection*{Special orthogonal groups}
We write $\op{O}(V,q)$ for the associated orthogonal group of isometries of $(V,q)$.  We will mostly be interested in the ``standard'' split quadratic spaces: 
\[
	q_{2n}(x_1,\ldots,x_{2n}) = \sum_{i=1}^n x_i x_{n+i}, \;\;\;\;\; \text{ and }  \;\;\;\;\; q_{2n+1}(x_1,\ldots,x_{2n+1}) = \sum_{i=1}^n x_i x_{n+1} + x_{2n+1}^2
\]
on the free modules of rank $2n$ and $2n+1$ over $k$.  We write $\op{O}_{i}$ for the orthogonal group scheme; functorially, this group scheme assigns to a $k$-algebra $R$ the usual orthogonal group $\op{O}(R^{\oplus i},q_{i})$ of automorphisms of $R^{\oplus i}$ preserving $q_{i}$.  We also consider the group-scheme $\op{GO}_i$ assigning to a $k$-algebra $R$ the group of orthogonal similitudes, i.e., the group of linear automorphisms of $R^{\oplus n}$ that preserve $q_i$ up to a unit.  

Abusing notation slightly, we write $\Z/2$ for the (constant) group scheme assigning to a $k$-algebra $R$ the additive group of continuous functions $\Spec R \to \Z/2$.  To discuss special orthgonal group schemes, recall that there is a Dickson invariant homomorphism
\[
D: \op{O}_i \longrightarrow \Z/2;
\]
(see \cite[IV.5.1]{Knus}) and a morphism $\chi: \Z/2 \to \mu_2$ defined by $f \mapsto (-1)^f$ such that the composite $\chi \circ D$ coincides with the determinant homomorphism $\det: \op{O}_i \to \mu_2$. If $i = 2n$, the Dickson invariant is split and surjective and one defines $\op{SO}_{2n}$ to be the kernel of the Dickson invariant, while if $i = 2n+1$, we define $\op{SO}_{2n+1}$ to be the kernel of the composite map $\chi \circ D$.  The group-scheme $\op{GO}_i$ has a subgroup scheme $\op{GSO}_i$ which is the fppf sub-group functor generated by $\gm{}$ and $\op{SO}_i$.  The group schemes $\op{O}_{2n}$ and $\op{SO}_{2n}$ are both smooth $k$-group schemes; the group scheme $\op{SO}_{2n+1}$ is a smooth $k$-group scheme, while $\op{O}_{2n+1}$ is a smooth $k$-group scheme if and only if $2$ is a unit in $k$ (see \cite[C.1.5]{Conrad} for these assertion).  Likewise, the group scheme $\op{GSO}_i$ is a smooth $k$-group scheme while $\op{GO}_i$ fails to be smooth if $i$ is odd (see, e.g., \cite[Remark C.3.11]{Conrad} and the discussion just preceding that statement).

We view $(k^{\oplus 2n+1},q_{2n+1})$ as a quadratic submodule of $(k^{\oplus 2n+2},q_{2n+2})$ by the embedding
\[
(e_1,\ldots,e_{2n+1}) \mapsto (e_1,\ldots,e_{n},e_{2n+1},e_{n+1},\ldots,e_{2n+1}).
\]
Likewise, we view $(k^{\oplus 2n},q_{2n})$ as the subspace of $(k^{\oplus 2n+1},q_{2n+1})$ where the coordinate function $x_{2n+1}$ vanishes.  These embeddings give rise to stabilization homomorphisms
\[
\op{O}_{i} \hookrightarrow \op{O}_{i+1}. \;\;\;\;\;\text{ and }\;\;\;\; \op{SO}_i \hookrightarrow \op{SO}_{i+1}
\] 
that we will need in the sequel.  
  
\subsubsection*{Bilinear forms, pointed quadratic spaces and traces}
If $q$ is a quadratic form, then we write $B$ for the associated bilinear form obtained by polarization, i.e., $B(x,y) = q(x + y) - q(x) - q(y)$.  When we consider $q_{i}$ on $k^{\oplus i}$ with coordinates $x_1,\ldots,x_{i}$, the associated bilinear forms are  
\[
\sum_{i=1}^n x_i y_{n+i} + x_{n+i}y_i, \;\;\;\;\;\text{ and }\;\;\;\;\;\sum_{i=1}^n x_i y_{n+i} + x_{n+i}y_i + 2 x_{2n+1} y_{2n+1}
\]
depending on whether $i$ is even or odd.

Recall that a {\em pointed} quadratic space over a commutative ring $k$ is a triple $(V,q,1)$ where $(V,q)$ is a quadratic space and $1 \in V$ such that $q(1) = 1 \in k$.  We point the split quadratic spaces $(k^{\oplus i},q_i)$ as follows: for $i = 2n$, we define $1 \in k^{\oplus 2n}$ to be $x_n = 1, x_{2n} = 1, x_i = x_{n+i} = 0, i = 1,\ldots,n-1$; for $i = 2n+1$, we define $1 \in k^{\oplus 2n+1}$ by $x_{2n+1} = 1$ and $x_i = 0, 1 \leq i \leq 2n$.  With this convention, only the embedding $(k^{\oplus 2n+1},q_{2n+1}) \to (k^{\oplus 2n+2},q_{2n+2})$ is pointed.  

Given a pointed quadratic space $(V,q,1)$, there is an associated trace function $t(x) := B(x,1)$.  In the special case $(k^{\oplus 2n+2},q_{2n+2},1)$, this trace function is given by the formula
\[
t_{2n+2}(x) = x_{n+1} + x_{2n+2}.
\]
Note that with this convention, $(k^{\oplus 2n+1},q_{2n+1})$ is contained in the orthogonal complement of the $k$-subspace $k \cdot 1$.  

\subsection{Orthogonal group actions}
\label{ss:orthogonalgroupactions}
We now analyze various orthogonal group actions; what we say is well-known over a field, but we review the statements over an arbitrary base ring for the convenience of the reader.  In order to streamline the analysis of actions over a general base, we begin by recalling some general facts about actions of reductive group schemes and homogeneous spaces by fiberwise techniques. 

\subsubsection*{Homogeneous spaces for reductive group schemes}
We recall the following result that allows us to deduce structural results about homogeneous spaces from ``fiberwise" computations over geometric points.  Recall that a reductive $k$-group scheme is a smooth affine $k$-group scheme $\op{G}$ such that the geometric fibers are {\em connected} reductive groups (e.g., \cite[Definition 3.1.1]{Conrad}). Unfortunately, we will need to deal with possibly disconnected group schemes and here the notion of geometric reductivity will be more useful for us; we refer the reader to \cite[Definition 9.1.1]{Alper} for a modern treatment of this notion.  

\begin{prop}
	\label{prop:checkisomorphism}
	Assume $\op{G}$ is an equidimensional (finitely presented) reductive $k$-group scheme with connected fibers and $(X,x)$ is an equidimensional, pointed, finitely presented smooth affine $k$-scheme equipped with an action of $\op{G}$.  Consider the orbit morphism:
	\[
	\tilde{\varphi}: \op{G} \longrightarrow X
	\]
	sending $g$ to $g \cdot x$.  Write $\op{G}_x$ for the stabilizer subgroup scheme of $\op{G}$.  If for every geometric point $s$ of $\Spec k$ the action of $\op{G}_{\kappa(s)}$ on $X_{\kappa(s)}$ is transitive, then
	\begin{enumerate}[noitemsep,topsep=1pt]
		\item $\tilde{\varphi}$ is finitely presented, faithfully flat and factors through a (pointed) isomorphism $\varphi: \op{G}/\op{G}_x \isomt X$ (in particular, $\op{G}/\op{G}_x$ exists as a scheme), and 
		\item the group scheme $\op{G}_x$ is a finitely presented flat affine group scheme that is moreover geometrically reductive; 
		\item if for every geometric point $s \in \Spec k$, the fiber $(\op{G}_x)_s$ is regular, then $\op{G}_x$ is a finitely presented smooth affine $k$-group scheme, $\op{G}_x/\op{G}_x^{\circ}$ is a finite \'etale $k$-group scheme, and $\op{G}_x^{\circ}$ is reductive. 
	\end{enumerate}
\end{prop}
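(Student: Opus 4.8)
The plan is to deduce each statement from standard descent-and-fiberwise results in the theory of group schemes, treating the three parts in order, since each uses the previous one. For part (1), the orbit morphism $\tilde\varphi$ is automatically finitely presented because $\op{G}$ and $X$ are. To check it is faithfully flat, I would use the fiberwise flatness criterion (EGA IV 11.3.10): since $\op{G}$ is flat over $k$, it suffices to check that for each point $s$ of $\Spec k$ the fiber map $\tilde\varphi_s \colon \op{G}_s \to X_s$ is flat, and since flatness can be checked after faithfully flat base change we may pass to a geometric point $\bar s$. Over $\bar s$ the map $\tilde\varphi_{\bar s}$ is the orbit map of an action of a smooth connected group on a smooth variety, and transitivity (the hypothesis) together with homogeneity of $\op{G}_{\bar s}/(\op{G}_x)_{\bar s}$ gives flatness of $\tilde\varphi_{\bar s}$; moreover surjectivity on each geometric fiber gives faithful flatness. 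Then the fppf quotient $\op{G}/\op{G}_x$ exists as an algebraic space, and because $\tilde\varphi$ is an fppf cover of the affine (hence schematic, separated) $X$ by the affine $\op{G}$, effectivity of fppf descent for affine morphisms shows $\op{G}/\op{G}_x$ is a scheme and $\varphi\colon \op{G}/\op{G}_x \isomt X$ is an isomorphism; it is pointed by construction.

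For part (2), the stabilizer $\op{G}_x$ is a closed subgroup scheme of $\op{G}$, hence affine, and it is finitely presented over $k$ since it is the fiber product $\op{G}\times_{X\times X}X$ of finitely presented $k$-schemes (using the graph of the action and the diagonal of $X$; one needs $X$ separated, which holds since $X$ is affine). Flatness of $\op{G}_x$ over $k$ follows from the short exact sequence $1\to\op{G}_x\to\op{G}\to\op{G}/\op{G}_x\to 1$ with $\op{G}$ and $\op{G}/\op{G}_x\cong X$ both flat: concretely, $\op{G}\to\op{G}/\op{G}_x$ is faithfully flat, $\op{G}_x$ is its fiber over the identity section, and flatness of $\op{G}_x$ over $k$ follows by base change along the flat section or by the fiberwise criterion again. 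Geometric reductivity of $\op{G}_x$ is the subtle point: over each geometric point $\bar s$ the fiber $(\op{G}_x)_{\bar s}$ is the stabilizer of a point under a transitive action of the (geometrically reductive, since reductive and connected) group $\op{G}_{\bar s}$ on the affine variety $X_{\bar s}$; by Matsushima's criterion in the form valid over algebraically closed fields (see Alper, or the classical statement — the stabilizer of a point on an affine $\op{G}$-variety for $\op{G}$ geometrically reductive is geometrically reductive), $(\op{G}_x)_{\bar s}$ is geometrically reductive. A relative group scheme all of whose geometric fibers are geometrically reductive, with the flatness and finite presentation already established, is geometrically reductive over $k$ by the fibral criterion for geometric reductivity in \cite[Chapter 9]{Alper}.

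For part (3), suppose each geometric fiber $(\op{G}_x)_{\bar s}$ is regular. A finitely presented flat $k$-group scheme with regular (equivalently, geometrically regular, since they live over algebraically closed fields) fibers is smooth over $k$ by the fibral smoothness criterion (EGA IV 17.5.1 / 17.8.2 combined with flatness). Once $\op{G}_x$ is smooth and affine with geometrically reductive fibers, each geometric fiber $(\op{G}_x)_{\bar s}$ is a smooth affine group whose reductive quotient structure is classical: its identity component $(\op{G}_x)^{\circ}_{\bar s}$ is a connected geometrically reductive — hence reductive — group, and the component group $(\op{G}_x)_{\bar s}/(\op{G}_x)^{\circ}_{\bar s}$ is finite. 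Forming the relative identity component $\op{G}_x^{\circ}$ (which exists as an open subgroup scheme since $\op{G}_x$ is smooth and finitely presented over $k$, by \cite[Exp.~VI$_\mathrm{B}$]{SGA3} or \cite[\S3.1]{Conrad}), the quotient $\op{G}_x/\op{G}_x^{\circ}$ is a separated, quasi-finite, flat, finitely presented $k$-group scheme with finite fibers, hence finite, and it is \'etale because its fibers are \'etale (the component group of a smooth group in any characteristic is \'etale); and $\op{G}_x^{\circ}$ is a smooth affine $k$-group scheme with connected reductive geometric fibers, i.e., reductive.

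The main obstacle I expect is packaging the geometric reductivity claim in part (2): one must combine Matsushima-type results over algebraically closed fields with the descent/fibral characterization of geometric reductivity over a general base, and be careful that ``geometrically reductive'' rather than ``reductive'' is the property that behaves well for possibly disconnected and a priori non-smooth group schemes. Everything else is an assembly of standard fibral criteria (flatness, smoothness, effectivity of fppf descent for affine morphisms) applied to the exact sequence $1\to\op{G}_x\to\op{G}\to X\to 1$.
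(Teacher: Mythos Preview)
Your proposal is correct and follows essentially the same skeleton as the paper: reduce flatness of $\tilde\varphi$ to geometric fibers via EGA~IV~11.3.10, use transitivity there, then read off properties of $\op{G}_x$ from the resulting fppf sequence.  The one substantive difference is how you handle geometric reductivity in part~(2) and the structural conclusions in part~(3).

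You propose to verify geometric reductivity fiberwise (classical Matsushima over algebraically closed fields) and then invoke a fibral criterion from \cite{Alper}; you correctly flag this as the delicate step.  The paper instead applies Alper's \emph{relative} Matsushima theorem \cite[Theorem~9.4.1]{Alper} directly: once $\op{G}$ is reductive over $k$ and $\op{G}/\op{G}_x \cong X$ is affine, $\op{G}_x$ is geometrically reductive over $k$ in one stroke, with no fibral assembly needed.  Similarly, for part~(3) you build $\op{G}_x^\circ$ and $\op{G}_x/\op{G}_x^\circ$ by hand via SGA3, whereas the paper simply cites \cite[Theorem~9.7.6]{Alper}, which says that for a smooth affine group scheme, geometrically reductive is equivalent to $\op{G}_x^\circ$ reductive and $\op{G}_x/\op{G}_x^\circ$ finite.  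Your route works but is longer; the paper's route shows that the ``main obstacle'' you anticipated is already packaged in Alper's relative statements.  One minor point: on each geometric fiber the paper makes the flatness argument explicit via generic flatness plus translating the resulting open set by the transitive action, which is a bit more concrete than your appeal to ``homogeneity.''
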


\begin{proof}
	The first assertion is the implication (ii) $\Longrightarrow$ (i) from \cite[III \S 3 Proposition 2.1]{DemazureGabriel}.  We repeat the proof of the result to clarify and strengthen the conclusions.  Since $\op{G}$ and $X$ are both finitely presented over $k$, $\tilde{\varphi}$ is finitely presented \cite[1.6.2(v)]{EGAIV.1}.  In that case, since $\op{G}$ is $k$-smooth and affine by assumption as a reductive $k$-group scheme, the fibral flatness criterion of \cite[11.3.10]{EGAIV.3} states that $\tilde{\varphi}$ is flat if and only if the fibers of $\tilde{\varphi}$ at points $s$ of $\Spec k$ are flat.  Since flatness can be checked after faithfully flat extension, we reduce to considering the geometric fibers of $\tilde{\varphi}$.  
	
	Since $X$ is smooth, it is automatically reduced.  By generic flatness \cite[\href{https://stacks.math.columbia.edu/tag/052B}{Proposition 052B}]{stacks-project}, there is a dense open subscheme $U$ of $X$ such that $\tilde{\varphi}^{-1}(U) \to U$ is flat and finitely presented.  In that case, the transitivity of the action guarantees that we can cover $X$ by translates of $U$ and thus conclude that $\tilde{\varphi}$ is faithfully flat.  In conjunction with the discussion of the preceding paragraph, $\tilde{\varphi}$ is fppf.  Moreover the geometric fibers of $\tilde{\varphi}$ are isomorphic to $(\op{G}_x)_s$, so if they are regular, then $\tilde{\varphi}$ is smooth.
	
	Since $\op{G}_x \to \op{G}$ is a closed subgroup scheme by construction, we see that $\op{G}_x$ is a finitely presented, flat, affine $k$-group scheme.  In that case, the second assertion is ``Matsushima's theorem".  In more detail, by \cite[Theorem 9.4.1]{Alper} we conclude that $\op{G}_x$ is geometrically reductive group scheme.  In that case, note that $\op{G}_x/\op{G}_x^{\circ}$ is necessarily also a geometrically reductive $k$-group scheme.
	
	If $\op{G}_x$ happens to be smooth, which as we observed in the previous paragraph happens if and only if its geometric fibers are regular, then \cite[Theorem 9.7.6]{Alper} guarantees that $\op{G}_x$ is geometrically reductive if and only if $\op{G}_x^{\circ}$ is reductive and $\op{G}_x/\op{G}_x^{\circ}$ is finite.  In that case, it is automatically also smooth and therefore \'etale. 
\end{proof}

\begin{rem}
	In Proposition~\ref{prop:checkisomorphism}, to guarantee smoothness of the fibers, the hypothesis that geometric fibers of $\op{G}_x$ are smooth is essential.  For example, the morphism $\gm{} \to \gm{}$ given by $t \mapsto t^n$ is transitive at the level of geometric points and makes the source into the total space of a $\mu_n$-torsor over the target.  Thus, if $k$ is a field having characteristic $p$ with $p|n$, the morphism in question is flat but not smooth since it has non-reduced geometric fibers. 
\end{rem}

\subsubsection*{Checking transitivity}
In order to apply Proposition~\ref{prop:checkisomorphism} we need some group-theoretic facts to aid in checking transitivity at geometric points, especially in the case of orthogonal group actions.  We review some distinguished classes of elements in orthogonal group schemes.  

\begin{entry}[Reflections]
	\label{entry:reflections}
	Suppose $(V,q)$ is a quadratic space over a commutative ring $k$, and $B$ is the $k$-bilinear form obtained from $q$ by polarization.  Assume that $v \in V$ is an element such that $q(v) \in k^{\times}$.  In that case, the {\em reflection} $r_{v}$ is defined by the formula
	\[
	r_v(w) = w - q(v)^{-1}B(v,w)v.
	\]
	The reflection $r_v$ is an element of $\op{O}(V,q)$.  Note that if $(V,q) = (k^{\oplus 2n},q_{2n})$, then the Dickson invariant of $r_v$ is equal to $1$, while if $(V,q) = (k^{\oplus 2n+1},q_{2n+1})$ the element $r_v$ has determinant $-1$ \cite[\S 4.1.1]{Bass}.
\end{entry}

\begin{lem}[{\cite[Lemma 8.2]{EKM}}]
	\label{lem:reflectiontransitivity}
	Suppose $(V,q)$ is a quadratic space over a field $k$, and $B$ is the $k$-bilinear form obtained from $q$ by polarization.  Suppose $x$ and $y$ are elements of $v$ such that $q(x) = q(y)$.  
	\begin{enumerate}[noitemsep,topsep=1pt]
	\item If $q(x-y)$ is non-zero, then $r_{x-y}(x) = y$.
	\item If $q(x-y) = 0$, $w$ is a vector such that $q(w)$,$B(x,w)$ and $B(y,w)$ are simultaneously non-zero, then setting $w' = x - r_w y$, $q(w') \neq 0$ and $(r_{w} \circ r_{w'})(x) = y$.
	\end{enumerate}
\end{lem}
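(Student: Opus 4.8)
The plan is to prove part (1) by a direct manipulation of the polar form, and then to bootstrap part (2) from part (1). For (1), unwinding the definition gives
\[
r_{x-y}(x) = x - q(x-y)^{-1} B(x-y,x)\,(x-y),
\]
which is legitimate since $q(x-y) \in k^{\times}$. The point is that the scalar $q(x-y)^{-1}B(x-y,x)$ equals $1$: the polar form is symmetric bilinear with $B(v,v) = q(2v) - 2q(v) = 2q(v)$, so $B(x-y,x) = B(x,x) - B(x,y) = 2q(x) - B(x,y)$, while $q(x-y) = q(x) + q(y) - B(x,y) = 2q(x) - B(x,y)$ using $q(x) = q(y)$. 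Hence $B(x-y,x) = q(x-y)$ and $r_{x-y}(x) = x - (x-y) = y$. This is pure bookkeeping with the polarization identity, with no real obstacle.

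For (2), the structural observation is that $r_w$ is an involution in $\op{O}(V,q)$ (immediate from the defining formula, using $B(w,w) = 2q(w)$), so it suffices to produce some $g \in \op{O}(V,q)$ with $g(x) = r_w(y)$, whence $(r_w \circ g)(x) = r_w(r_w(y)) = y$. Since $r_w$ is an isometry, $q(r_w(y)) = q(y) = q(x)$, so part (1) applied to the pair $\{x,\,r_w(y)\}$ provides exactly such a $g$ — namely $g = r_{w'}$ with $w' = x - r_w(y)$ — as soon as $q(w') \neq 0$. Thus the whole statement reduces to this one inequality.

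The verification of $q(w') \neq 0$ is the only genuine computation, and it is the step I would be most careful about. Writing $r_w(y) = y - c\,w$ with $c = q(w)^{-1}B(w,y)$, one has $w' = (x-y) + c\,w$, so
\[
q(w') = q(x-y) + c^2 q(w) + c\,B(x-y,w);
\]
now $q(x-y) = 0$ by hypothesis, $c^2 q(w) = q(w)^{-1}B(w,y)^2$, and $c\,B(x-y,w) = q(w)^{-1}B(w,y)\bigl(B(x,w) - B(y,w)\bigr)$, so the two copies of $q(w)^{-1}B(w,y)^2$ cancel and one is left with $q(w') = q(w)^{-1}B(x,w)\,B(y,w)$. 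As $k$ is a field and $q(w)$, $B(x,w)$, $B(y,w)$ are nonzero by assumption, $q(w') \neq 0$, which finishes (2).
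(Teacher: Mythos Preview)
Your proof is correct. Both parts are handled cleanly: the identity $B(x-y,x)=q(x-y)$ in part~(1) follows exactly as you say from the polarization relations, and in part~(2) the reduction to part~(1) via the involution $r_w$ together with the computation $q(w')=q(w)^{-1}B(x,w)B(y,w)$ is accurate and complete.

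There is nothing to compare against here: the paper does not supply its own proof of this lemma but simply cites it as \cite[Lemma~8.2]{EKM}. Your argument is essentially the standard one found in that reference, so there is no substantive difference in approach to discuss.
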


\subsubsection*{Representation spaces}
Consider the standard action of $\op{GSO}_{2n+2}$ on ${\mathbb A}^{2n+2}_k$.  This action preserves $q_{2n+2} = 0$ by definition and there is an induced action of $\op{GSO}_{2n+2}$ on the smooth affine $k$-scheme ${\mathbb A}^{2n+2}_k \setminus \{q_{2n+2} = 0\}$.  The base-point $1$ defines a $k$-point of ${\mathbb A}^{2n+2}_k$ and the next result yields an embedding of $\op{O}_{2n+1}$ in $\op{GSO}_{2n+2}$ as the subgroup stabilizing this point.

\begin{prop}
	\label{prop:oddorthogonalaction}
	The action of $\op{GSO}_{2n+2}$ on ${\mathbb A}^{2n+2}_k \setminus\{ {q_{2n+2} = 0} \}$ is transitive on geometric points and taking the orbit through $1$,  there is an induced isomorphism of schemes 
	\[
	\op{GSO}_{2n+2}/\op{O}_{2n+1} \isomto {\mathbb A}^{2n+2}_k \setminus\{ {q_{2n+2} = 0} \}.
	\] 
\end{prop}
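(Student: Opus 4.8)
The plan is to deduce this from Proposition~\ref{prop:checkisomorphism}, applied to the orbit morphism $\tilde\varphi\colon \op{GSO}_{2n+2}\to X$, $g\mapsto g\cdot 1$, where $X:=\A^{2n+2}_k\setminus\{q_{2n+2}=0\}$, and let $H$ denote the scheme-theoretic stabilizer of $1$. First I would check the hypotheses: $X$ is an affine open subscheme of $\A^{2n+2}_k$ (the complement of the principal divisor $\{q_{2n+2}=0\}$), hence smooth and equidimensional of relative dimension $2n+2$ over $\Spec k$, and it is pointed by $1$ since $q_{2n+2}(1)=1\in k^\times$; and $\op{GSO}_{2n+2}$ is an equidimensional, finitely presented, smooth affine $k$-group scheme whose geometric fibres are connected reductive (quotients of $\op{SO}_{2n+2}\times\Gm$ by a central $\mu_2$), hence a reductive $k$-group scheme with connected fibres in the sense needed. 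Granted transitivity of the action at geometric points, Proposition~\ref{prop:checkisomorphism}(1) then yields directly that $\tilde\varphi$ is faithfully flat and factors through a $\op{GSO}_{2n+2}$-equivariant isomorphism $\op{GSO}_{2n+2}/H\isomto X$.

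The substantive point is transitivity: for every algebraically closed field $K$ over $k$ and every $v\in X(K)=\{w\in K^{2n+2}:q_{2n+2}(w)\neq 0\}$, I must produce $g\in\op{GSO}_{2n+2}(K)$ with $g\cdot v=1$. First act by the central copy of $\Gm$ of scalar matrices inside $\op{GSO}_{2n+2}$: choosing $\lambda\in K^\times$ with $\lambda^2=q_{2n+2}(v)^{-1}$, which is possible as $K$ is algebraically closed, replaces $v$ by $\lambda v$, for which $q_{2n+2}(\lambda v)=1=q_{2n+2}(1)$, so we may assume $q_{2n+2}(v)=1$. It then suffices to move $v$ to $1$ by an element of $\op{SO}_{2n+2}(K)\subseteq\op{GSO}_{2n+2}(K)$. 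By Lemma~\ref{lem:reflectiontransitivity}, applied over $K$, there is a composite $g$ of at most two reflections with $g(v)=1$; if it uses an even number of reflections then $g\in\op{SO}_{2n+2}(K)$ and we are done. If instead $g=r_{v-1}$ is a single reflection, I would correct its Dickson invariant by post-composing with a reflection $r_w$ \emph{fixing} $v$, so that $g\circ r_w$ still sends $v$ to $1$ and now lies in $\op{SO}_{2n+2}(K)$. Such a $w$ exists in every characteristic: one needs $q_{2n+2}(w)\in K^\times$ and $B(v,w)=0$; when $2\in K^\times$ the restriction of $q_{2n+2}$ to the nondegenerate hyperplane $v^\perp$ is a nonzero form, so one may take $w\in v^\perp$ with $q_{2n+2}(w)\neq 0$, while when $2=0$ in $K$ one takes $w=v$, since then $B(v,v)=2q_{2n+2}(v)=0$ and $q_{2n+2}(v)=1\in K^\times$, so $r_v$ is defined, fixes $v$, and has Dickson invariant $1$.

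It remains to identify $H$ with $\op{O}_{2n+1}$. If $g\cdot 1=1$, comparing values of $q_{2n+2}$ forces the similitude multiplier of $g$ to equal $1$, so $g\in\op{GSO}_{2n+2}\cap\op{O}_{2n+2}$; such a $g$ fixes $1$ and so preserves the quadratic submodule $\langle 1\rangle^{\perp}$, whose restricted form is the semiregular form $(k^{\oplus 2n+1},q_{2n+1})$ recalled in Section~\ref{ss:qspacesorthogonalgroups}. Restriction to $\langle 1\rangle^{\perp}$ then defines a homomorphism $H\to\op{O}_{2n+1}$ that I would verify realizes $H$ as $\op{O}_{2n+1}$ inside $\op{GSO}_{2n+2}$, compatibly with the inclusion of $H$; combined with the isomorphism of the first paragraph this gives the asserted $\op{GSO}_{2n+2}/\op{O}_{2n+1}\isomto X$. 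I expect the main obstacle to be the residue characteristic $2$ points: there $v^{\perp}$ is no longer a complement to $\langle v\rangle$ (it contains both $v$ and $1$), so the correcting reflection must be chosen as indicated, and $\op{O}_{2n+1}$ is no longer smooth, so the identification of $H$ and its compatibility with the Dickson invariant have to be checked directly rather than transported from the split fibre where $2$ is invertible.
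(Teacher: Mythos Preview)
Your approach is essentially the paper's: invoke Proposition~\ref{prop:checkisomorphism}, verify transitivity at geometric points by first scaling to norm $1$ via the central $\gm{}$ and then moving to $1$ by reflections from Lemma~\ref{lem:reflectiontransitivity}, and identify the stabilizer as $\op{O}_{2n+1}$ by restriction to $\langle 1\rangle^\perp$. Your handling of the Dickson parity is in fact more careful than the paper's: the paper simply asserts that the composite of a scaling and a single reflection lies in $\op{GSO}_{2n+2}$, whereas you observe that a lone reflection has Dickson invariant $1$ and correct it by composing with a further reflection $r_w$ fixing $v$ (the choice $w=v$ when $\operatorname{char}K=2$, using $B(v,v)=2q(v)=0$, is a clean way to produce such a $w$). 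The one place you are terser than the paper is in citing Lemma~\ref{lem:reflectiontransitivity}(2) without checking its hypothesis: the paper explicitly argues that over an infinite field the loci $q(-)\neq 0$, $B(\lambda x,-)\neq 0$, $B(1,-)\neq 0$ are nonempty opens with nonempty common intersection, which is what furnishes the auxiliary vector $w$.
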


\begin{proof}
	Suppose $x$ is a geometric point of ${\mathbb A}^{2n+2}_k \setminus\{ {q_{2n+2} = 0} \}$.  Henceforth, we write $q$ for $q_{2n+2}$.  By construction $q(x) \neq 0$ and $q(1)  = 1 \neq 0$.  Since $k$ is algebraically closed, we can always find $\lambda \in k^{\times}$ such that $q(\lambda x) = q(1) = 1$.  In that case, if $q(\lambda x-1) \neq 0$, then the reflection $r_{\lambda x-1}$ moves $\lambda x$ to $1$ (Lemma~\ref{lem:reflectiontransitivity}.1), and the composite of scaling by $\lambda$ and $r_{\lambda x - 1}$ lies in $\op{GSO}_{2n+2}$.
	
	Thus, suppose that $q(\lambda x - 1) = 0$.  In that case, since $\lambda x \neq 0$, we know that $\lambda x$ has a non-zero coordinate, and taking the standard basis vector $e_i$ for suitable $i$ we see that $B(\lambda x,e_i)$ is non-zero, i.e., the locus $B(\lambda x,-) \neq 0$ is a non-empty open subscheme of ${\mathbb A}^{2n+2}_k$.  Likewise, the locus $B(1,-) \neq 0$ is a non-empty open subscheme of ${\mathbb A}^{2n+2}_k$.  Since $k$ is algebraically closed and thus infinite, it follows that the intersection of $B(\lambda x,-) \neq 0$ and $B(1,-) \neq 0$ with $q(-) \neq 0$ is non-empty.  Therefore, by Lemma~\ref{lem:reflectiontransitivity}.2 there exists a composite of reflections taking $\lambda x$ to $1$.
	
	Appealing to the first two points of Proposition~\ref{prop:checkisomorphism}, there is an induced isomorphism 
	\[
	\op{GSO}_{2n+2}/\op{Stab}_1 \isomto {\mathbb A}^{2n+2}_k \setminus\{ {q_{2n+2} = 0} \},
	\]  
	and it remains to identify $\op{Stab}_1$, which is a geometrically reductive flat affine $k$-group scheme.  To this end, we analyze the fibers of $\op{Stab}_1$ over points of $\Spec k$.  Note that any element of $\op{GSO}_{2n+2}$ that stabilizes $1$ necessarily fixes the restriction of $q$ to the orthogonal complement to $k \cdot 1$.  In other words, any element of $\op{GSO}_{2n+2}$ that stabilizes $1$ preserves the form $q_{2n+1}$ and thus lies in $\op{O}_{2n+1}$.  Conversely any element of $\op{O}_{2n+1}$ stabilizes $1$, so we conclude that $\op{Stab}_1 = \op{O}_{2n+1}$.  Note: we cannot appeal to the third point of Proposition~\ref{prop:checkisomorphism} as $\op{O}_{2n+1}$ fails to be a smooth group scheme.  
\end{proof}

\begin{rem}
	In line with the discussion of the introduction, Proposition~\ref{prop:oddorthogonalaction} is a special case of a more general statement relating the structure group of quadratic Jordan algebra and the automorphism group.  General results of \cite[Corollary 6.6]{LoosAlgGp} guarantee that the structure group of a separable, unital, quadratic Jordan algebra is a reductive group scheme.  The automorphism group of the Jordan algebra can be identified with the subgroup scheme of the structure group scheme that preserves the identity.  The relationship between the structure group and the automorphism group for quadratic Jordan algebras was analyzed in \cite[\S 14]{Springer} and we refer the reader to \cite[\S 3]{BLG} for  more general results in the spirit of the above proposition.  
\end{rem}

\subsection{Proof of Theorem~\ref{thmintro:main}}
\label{ss:proofofmaintheorem}
Consider the pointed quadratic space $(k^{\oplus 2n+2},q_{2n+2},1)$.  Recall that the trace form $t_{2n+2}(x) = B_{2n+2}(x,1)$ where $B_{2n+2}$ is the bilinear form obtained from $q_{2n+2}$ by polarization.  Explicitly, this trace form is given by $t_{2n+2}(x) = x_{n+1} + x_{2n+2}$.  Proposition~\ref{prop:oddorthogonalaction} shows that the stabilizer of $1$ in $\op{GSO}_{2n+2}$ is identified with $\op{O}_{2n+1}$.  It follows that there is an action of $\op{SO}_{2n+1} \subset \op{O}_{2n+1}$ on ${\mathbb A}^{2n+2}_k$ as automorphisms that preserve $q_{2n+2}$ and fix $1$.  This $\op{SO}_{2n+1}$-action then preserves the hypersurface $t_{2n+2}(x) = 1$ since it fixes $1$ and thus induces an action on the variety defined by $q_{2n+2}(x) = 0$ and $t_{2n+2}(x) = 1$. After renaming variables appropriately, the variety defined by $q_{2n+2} = 0, t_{2n+2} = 1$ is precisely $\op{Q}_{2n}$.  We summarize these observations in the following result.

\begin{lem}
	\label{lem:constructingtheaction}
The closed subscheme of ${\mathbb A}^{2n+2}_k$ defined by $t_{2n+2} = 1$ and $q_{2n+2} = 0$ is isomorphic to the scheme $\op{Q}_{2n}$; this scheme comes equipped with the action of $\op{SO}_{2n+1}$ as a subgroup of $\op{GSO}_{2n+2}$ stabilizing the base-point $1 \in {\mathbb A}^{2n+2}_k$.
\end{lem}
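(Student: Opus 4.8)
The plan is to verify the two assertions separately, each being a short unwinding of the explicit formulas for $q_{2n+2}$ and $t_{2n+2}$ together with Proposition~\ref{prop:oddorthogonalaction}. For the isomorphism, I would use that the trace form $t_{2n+2}(x) = x_{n+1} + x_{2n+2}$ is \emph{linear}: on the closed subscheme $\{t_{2n+2} = 1\} \subset {\mathbb A}^{2n+2}_k$ the coordinate $x_{2n+2}$ is forced to be $1 - x_{n+1}$, so $\{t_{2n+2} = 1\}$ is a coordinate hyperplane, isomorphic to ${\mathbb A}^{2n+1}_k$ with coordinates $x_1, \dots, x_{2n+1}$. Since $q_{2n+2} = \sum_{i=1}^n x_i x_{n+1+i} + x_{n+1} x_{2n+2}$, substituting $x_{2n+2} = 1 - x_{n+1}$ shows that $\{q_{2n+2} = 0,\, t_{2n+2} = 1\}$ is the hypersurface in ${\mathbb A}^{2n+1}_k$ cut out by $\sum_{i=1}^n x_i x_{n+1+i} + x_{n+1}(1 - x_{n+1}) = 0$. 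Negating the coordinates $x_{n+2}, \dots, x_{2n+1}$ and relabeling $x_{n+1+i} \mapsto x_{n+i}$ for $1 \le i \le n$ and $x_{n+1} \mapsto x_{2n+1}$ is a linear automorphism of ${\mathbb A}^{2n+1}_k$ carrying this hypersurface onto $\op{Q}_{2n}$ (whose equation $\sum_{i=1}^n x_i x_{n+i} = x_{2n+1}(1 - x_{2n+1})$ it recovers up to an overall sign). For use in the proof of Theorem~\ref{thmintro:main} I would also record that under this identification the distinguished point $x_0 \in \op{Q}_{2n}$ (with $x_{2n+1} = 1$ and all other coordinates $0$) corresponds to the standard basis vector $e_{n+1} \in {\mathbb A}^{2n+2}_k$; note that this is \emph{not} the base vector $1$, since $t_{2n+2}(1) = 2$.

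For the action, Proposition~\ref{prop:oddorthogonalaction} identifies the stabilizer $\op{Stab}_1 \subset \op{GSO}_{2n+2}$ with $\op{O}_{2n+1}$, so restricting along $\op{SO}_{2n+1} \hookrightarrow \op{O}_{2n+1}$ gives an action of $\op{SO}_{2n+1}$ on ${\mathbb A}^{2n+2}_k$ by linear automorphisms that preserve $q_{2n+2}$ and fix $1$. The point that needs to be said is that every such $g$ is a genuine isometry of $q_{2n+2}$, not merely a similitude: as an element of $\op{GSO}_{2n+2}$ it scales $q_{2n+2}$ by some unit $\mu(g)$, and evaluating at the fixed vector $1$ gives $1 = q_{2n+2}(1) = q_{2n+2}(g \cdot 1) = \mu(g)\, q_{2n+2}(1) = \mu(g)$. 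An isometry of $q_{2n+2}$ preserves its polarization $B_{2n+2}$, whence $t_{2n+2}(g \cdot x) = B_{2n+2}(g \cdot x, 1) = B_{2n+2}(g \cdot x, g \cdot 1) = B_{2n+2}(x, 1) = t_{2n+2}(x)$. Therefore $\op{SO}_{2n+1}$ preserves both $\{q_{2n+2} = 0\}$ and $\{t_{2n+2} = 1\}$, hence acts on their intersection; transporting this action along the isomorphism of the first paragraph equips $\op{Q}_{2n}$ with the asserted $\op{SO}_{2n+1}$-action.

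I do not anticipate a genuine obstacle: the entire content is the isometry-versus-similitude observation above together with the bookkeeping of the explicit quadratic and trace forms. The only spot where care is required is the linear change of variables matching $\sum_{i=1}^n x_i x_{n+1+i} + x_{n+1}(1 - x_{n+1}) = 0$ with the equation of $\op{Q}_{2n}$ — in particular getting the signs of the hyperbolic partners right — but this is routine once one writes it out.
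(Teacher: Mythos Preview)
Your proposal is correct and mirrors the paper's own argument (the paragraph immediately preceding the lemma), which likewise invokes Proposition~\ref{prop:oddorthogonalaction} for the action and dismisses the isomorphism as a renaming of variables; you have simply made the coordinate change and the isometry-versus-similitude point explicit where the paper leaves them implicit. One small remark on your aside: the paper's (implicit) convention identifies the distinguished point $x_0$ with $e_{2n+2}$ rather than $e_{n+1}$ in ${\mathbb A}^{2n+2}_k$, but your choice is equally valid provided you carry it consistently into Theorem~\ref{thm:quadricsinchar2}.
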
  

We consider the point $x_0 \in \op{Q}_{2n}(k)$ given by $x_{2n+2} = 1$, $x_1=\cdots=x_{2n+1} = 0$, and we view $\op{Q}_{2n}$ as pointed by $x_0$.  Observe that under the inclusion $\op{O}_{2n} \hookrightarrow \op{O}_{2n+1}$, there is an inclusion
\[
\op{O}_{2n} \hookrightarrow \op{Stab}_{x_0}(\op{O}_{2n+1}).
\]
Our goal will be to use this fact to identify the stabilizer in $\op{SO}_{2n+1}$ of the point $x_0$.  The next result yields Theorem~\ref{thmintro:main} from the introduction.

\begin{thm}
	\label{thm:quadricsinchar2}
	Assume $k$ is a commutative ring and $n \geq 1$ is an integer.  Consider the action of $\op{SO}_{2n+1}$ on $\op{Q}_{2n}$ of \textup{Lemma~\ref{lem:constructingtheaction}}.
	\begin{enumerate}[noitemsep,topsep=1pt]
		\item Taking the $\op{SO}_{2n+1}$-orbit through the $k$-point $x_0$ on $\op{Q}_{2n}$ yields a surjective smooth morphism 
		\[
		\tilde{\varphi}: \op{SO}_{2n+1} \to \op{Q}_{2n}
		\] 
		that factors through an $\op{SO}_{2n+1}$-equivariant isomorphism
		\[
		\varphi: \op{SO}_{2n+1}/\op{SO}_{2n} \isomto \op{Q}_{2n}.
		\]
		\item The $\op{SO}_{2n}$-torsor $\op{SO}_{2n+1} \to \op{Q}_{2n}$ is Zariski locally trivial.
	\end{enumerate}
\end{thm}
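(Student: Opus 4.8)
The plan is to apply Proposition~\ref{prop:checkisomorphism} to the group $\op{G} = \op{SO}_{2n+1}$ acting on $(X,x) = (\op{Q}_{2n}, x_0)$ via the action constructed in Lemma~\ref{lem:constructingtheaction}. Both $\op{SO}_{2n+1}$ and $\op{Q}_{2n}$ are equidimensional, finitely presented, smooth affine $k$-schemes (the first being a split reductive group scheme over $\Z$ base-changed to $k$, the second a smooth affine hypersurface), and $\op{SO}_{2n+1}$ has connected fibers. So the two hypotheses of the proposition that remain to be checked are: (a) transitivity of the $\op{SO}_{2n+1}(\kappa(s))$-action on $\op{Q}_{2n}(\kappa(s))$ for every geometric point $s$ of $\Spec k$, and (b) regularity of the fiber $(\op{Stab}_{x_0})_s$ at every geometric point—which will come for free once we identify the stabilizer. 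Granting these, Proposition~\ref{prop:checkisomorphism} immediately gives that $\tilde\varphi$ is faithfully flat, finitely presented, and smooth (smoothness because the geometric fibers are $(\op{Stab}_{x_0})_s$, which we will show to be $\op{SO}_{2n}$), and factors through a pointed $\op{SO}_{2n+1}$-equivariant isomorphism $\varphi:\op{SO}_{2n+1}/\op{Stab}_{x_0}\isomto\op{Q}_{2n}$; identifying $\op{Stab}_{x_0}=\op{SO}_{2n}$ then finishes Point~(1). Surjectivity of $\tilde\varphi$ is part of faithful flatness.

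For the transitivity statement (a), I reduce to the situation of Proposition~\ref{prop:oddorthogonalaction}: over an algebraically closed field $\kappa(s)$, the $\op{GSO}_{2n+2}$-action on $\A^{2n+2}\setminus\{q_{2n+2}=0\}$ is transitive with stabilizer of $1$ equal to $\op{O}_{2n+1}$, and $\op{Q}_{2n}$ sits inside this as the locus $q_{2n+2}=0,\ t_{2n+2}=1$. Since $\op{SO}_{2n+1}$ fixes $1$, it acts on $\op{Q}_{2n}$; to show it acts transitively on $\op{Q}_{2n}(\kappa(s))$, I take two points $x,y$ with $q_{2n+2}(x)=q_{2n+2}(y)=0$ and $t_{2n+2}(x)=t_{2n+2}(y)=1$ (equivalently $B_{2n+2}(x,1)=B_{2n+2}(y,1)=1$) and produce an element of $\op{SO}_{2n+1}$ carrying $x$ to $y$. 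The natural tool is Lemma~\ref{lem:reflectiontransitivity} applied \emph{inside the hyperplane} $t_{2n+2}=1$, i.e., in the orthogonal complement $(k\cdot 1)^\perp$ equipped with $q_{2n+1}$: writing $x = v + \tfrac12\cdot(\text{something involving }1)$—more precisely decomposing relative to $1$ and its hyperbolic partner—the points $x,y$ translate to points on the quadric $q_{2n+1}=1$, and Lemma~\ref{lem:reflectiontransitivity} gives either a single reflection $r_{x-y}$ (when $q_{2n+1}(x-y)\neq 0$) or a product of two reflections $r_w\circ r_{w'}$ (when $q_{2n+1}(x-y)=0$), the latter using that $\kappa(s)$ is infinite so a suitable auxiliary vector $w$ exists. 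Here is the key point making the answer land in $\op{SO}_{2n+1}$ rather than $\op{O}_{2n+1}$: by \ref{entry:reflections}, a reflection in the odd orthogonal group has determinant $-1$, so a \emph{single} reflection lies in $\op{O}_{2n+1}\setminus\op{SO}_{2n+1}$; but a reflection $r_v$ on $(k^{\oplus 2n+1},q_{2n+1})$ that fixes $1$ (equivalently, with $B(v,1)=0$, i.e., $v\in (k\cdot 1)^\perp$) extended trivially to all of $k^{\oplus 2n+2}$ fixing both $1$ and its hyperbolic partner has determinant $-1$ on the ambient space but—one must check via the Dickson invariant—actually the relevant sign on $(k\cdot1)^\perp$ with $q_{2n}$ makes it land in $\op{O}_{2n}$, and then composing an \emph{even} number of such reflections gives $\op{SO}_{2n+1}$. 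So in case~(1) of the lemma I must compose $r_{x-y}$ with a correction reflection fixing both $x$ and $y$, exactly as in \cite[II.10--11]{Dieudonne}; in case~(2) the product of two reflections already has the right Dickson invariant. The main obstacle is precisely this bookkeeping of Dickson invariants/determinants in characteristic $2$, together with checking that the reflections used can always be chosen to fix $1$ (so as to stabilize $\op{Q}_{2n}$) while still being transitive on $\op{Q}_{2n}(\kappa(s))$; this is the heart of the classical argument and where characteristic~$2$ requires care.

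To identify $\op{Stab}_{x_0}$, note we already have $\op{O}_{2n}\hookrightarrow\op{Stab}_{x_0}(\op{O}_{2n+1})$, hence $\op{SO}_{2n}\hookrightarrow\op{Stab}_{x_0}(\op{SO}_{2n+1})=:\op{Stab}_{x_0}$. For the reverse inclusion I argue fiberwise over $\Spec k$: any $g\in\op{SO}_{2n+1}$ fixing $x_0$ fixes the span $k\cdot x_0$, hence preserves its orthogonal complement with the restricted form; since $x_0$ and $1$ together span a hyperbolic plane inside $k^{\oplus 2n+2}$ (check: $q_{2n+2}(x_0)=0$, $B_{2n+2}(x_0,1)=t_{2n+2}(x_0)=1$), an element fixing both $1$ and $x_0$ preserves this hyperbolic plane pointwise and therefore restricts to an isometry of its orthogonal complement, which is $(k^{\oplus 2n},q_{2n})$; thus $\op{Stab}_{x_0}\subseteq\op{O}_{2n}$, and intersecting with $\op{SO}_{2n+1}$ and tracking the Dickson invariant gives $\op{Stab}_{x_0}=\op{SO}_{2n}$. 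Since $\op{SO}_{2n}$ is a smooth $k$-group scheme, its geometric fibers are regular, so Point~(3) of Proposition~\ref{prop:checkisomorphism} applies and confirms smoothness of $\tilde\varphi$. Finally, for Point~(2) of the theorem—Zariski-local triviality of the $\op{SO}_{2n}$-torsor $\op{SO}_{2n+1}\to\op{Q}_{2n}$—I would construct explicit local sections: over the standard affine open cover of $\op{Q}_{2n}$ by loci where one of the coordinate functions is invertible (or, intrinsically, by loci where a chosen $B_{2n+2}$-pairing against a fixed vector is a unit), a reflection formula as above depends algebraically on the point, giving a section of $\tilde\varphi$ over each open; concretely, on the open where (after renaming) $x_1$ is invertible one writes down the composite of scalings and reflections moving $x_0$ to the given point, with coefficients rational functions that are regular on that open. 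This mirrors the classical construction of Zariski-local sections of $\op{SO}_{m+1}\to\op{S}_m$; I expect this to be routine once the transitivity argument is in place, since it is simply the same formulas read as morphisms rather than on points.
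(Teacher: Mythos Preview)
Your overall architecture matches the paper's: apply Proposition~\ref{prop:checkisomorphism} to the action of Lemma~\ref{lem:constructingtheaction}, verify transitivity at geometric points, identify the stabilizer, and then argue local triviality.  But there are real gaps in two of the three substantive steps, and your approach to the third differs from the paper's.

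\textbf{Transitivity.}  Your reduction ``writing $x = v + \tfrac{1}{2}\cdot(\ldots)$'' and translating to the quadric $q_{2n+1}=1$ invokes $2^{-1}$ and so collapses precisely in characteristic~$2$, which is the only case requiring new work (the paper dismisses characteristic $\neq 2$ as classical).  More importantly, you are fighting a phantom: in characteristic~$2$ the determinant map $\op{O}_{2n+1}\to\mu_2$ is trivial on $k$-points (since $-1=1$), so $\op{SO}_{2n+1}(k)\to\op{O}_{2n+1}(k)$ is a \emph{bijection}.  Hence any reflection $r_v$ with $v$ in the trace-zero hyperplane already lies in $\op{SO}_{2n+1}(k)$, and no ``correction reflection'' or sign bookkeeping is needed.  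The paper exploits exactly this: one checks $t_{2n+2}(x-x_0)=0$ so that $x-x_0\in k^{\oplus 2n+1}$, and then Lemma~\ref{lem:reflectiontransitivity} applied inside $(k^{\oplus 2n+1},q_{2n+1})$ produces either a single reflection or a product of two, both automatically in $\op{SO}_{2n+1}(k)$.  Your worry about a lone reflection having determinant $-1$ is only an issue in characteristic $\neq 2$, where the result is already known.

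\textbf{Stabilizer.}  Your claim that ``tracking the Dickson invariant gives $\op{Stab}_{x_0}=\op{SO}_{2n}$'' is where the argument is most delicate and your sketch is thinnest.  Over a characteristic-$2$ field, every element of $\op{O}_{2n}(k)$ has determinant $1$ when extended by the identity to $\op{O}_{2n+1}(k)$, so the naive determinant check does \emph{not} cut the stabilizer down from $\op{O}_{2n}$ to $\op{SO}_{2n}$ fiberwise.  The paper instead works over $\Z$ and argues globally: one has $\op{SO}_{2n}\subset\op{Stab}_{x_0}\subset\op{O}_{2n}$, and if the stabilizer were all of $\op{O}_{2n}$ then $\op{SO}_{2n+1}/\op{SO}_{2n}\to\op{Q}_{2n}$ would be a nontrivial \'etale $\Z/2$-torsor; a Picard-group computation (Lemma~\ref{lem:no2covers}) shows $\op{Q}_{2n}$ admits no such torsor over the geometric generic point of $\Spec\Z$, and constancy of $\Z/2$ then forces triviality.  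This step is not routine Dickson bookkeeping, and you would need a genuinely different idea to replace it.

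\textbf{Zariski local triviality.}  Your plan to write down explicit reflection-based local sections is plausible but laborious, and making the formulas regular in characteristic~$2$ is not obviously routine.  The paper instead observes that a section exists over any local ring $R$ by Witt cancellation for quadratic spaces over local rings \cite[Corollary~III.4.3]{Baeza}: an $\op{SO}_{2n}$-torsor on $\Spec R$ that trivializes in $\op{SO}_{2n+1}$ corresponds to a stably hyperbolic form, hence is already hyperbolic.  This is both shorter and uniform in the characteristic.
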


\begin{proof}
	Since all schemes and groups in question are defined over $\Z$, it suffices to prove the result in that case and the results in the general case are deduced by base change.  Thus, we assume $k = \Z$ in what follows.  By appeal to Proposition~\ref{prop:checkisomorphism}, if $\op{SO}_{2n+1}$ acts transitively on $\op{Q}_{2n}$ after base-change to geometric points of $\Z$, then there is an induced isomorphism $\op{SO}_{2n+1}/\op{Stab}_{x_0} \isomto \op{Q}_{2n}$, where $\op{Stab}_{x_0}$ is the stabilizer group-scheme of the point $x_0$ in $\op{SO}_{2n+1}$.  Transitivity of the action at geometric points is Proposition~\ref{prop:transitivityI} below.
	
	Now, we identify the stabilizer explicitly.  Since any element of the stabilizer fixes $x_0$, it preserves $k \cdot x_0$.  An explicit computation shows that one has a sequence of inclusions $\op{SO}_{2n} \subset \op{Stab}_{x_0} \subset \op{O}_{2n}$.  Suppose $\op{Stab}_{x_0}$ were all of $\op{O}_{2n}$.  In that case, the sequence of inclusions $\op{SO}_{2n} \subset \op{O}_{2n} \subset \op{SO}_{2n+1}$ would yield an fppf $\Z/2$-torsor of the form $\op{SO}_{2n+1}/\op{SO}_{2n} \to \op{Q}_{2n}$.  Since $\op{SO}_{2n+1}$ is connected, $\op{SO}_{2n+1}/\op{SO}_{2n}$ must be a connected affine scheme, so this torsor is necessarily non-trivial.  On the other hand, this torsor is necessarily \'etale locally trivial since it is an associated fiber space for an $\op{O}_{2n}$-torsor, and $\op{O}_{2n}$ is a smooth $k$-group scheme (see \cite[XVII.8.1]{SGA3.3} or \cite[Remarques 11.8.2]{BrauerIII}).  In that case, since this $\Z/2$-torsor is trivial upon base-change to the geometric generic point of $\Spec \Z$ by appeal to Lemma~\ref{lem:no2covers} below, and since $\Z/2$ is a constant group scheme, it must have been trivial to begin.  It follows that $\op{Stab}_{x_0}$ must be isomorphic to $\op{SO}_{2n}$ and thus $\op{Q}_{2n}$ is isomorphic to $\op{SO}_{2n+1}/\op{SO}_{2n}$.
	
	For the third statement, recall that Witt cancellation holds for quadratic spaces over local rings.  More precisely, assume $R$ is a local ring, and consider a morphism $\Spec R \to \op{SO}_{2n+1}/\op{SO}_{2n}$.  Such a morphism corresponds to an $\op{SO}_{2n}$-torsor on $\Spec R$ that becomes trivial when viewed as an $\op{SO}_{2n+1}$-torsor, i.e., a stably hyperbolic quadratic form.  By \cite[Corollary III.4.3]{Baeza}, if $q_1$ is a stably hyperbolic quadratic space over $R$, it follows that $q_1$ is actually hyperbolic, i.e., the morphism $\Spec R$ lifts to $\op{SO}_{2n+1}$.  It follows that $\op{SO}_{2n+1} \to \op{Q}_{2n}$ is Zariski locally trivial, since it has Zariski local sections.
\end{proof}

\begin{lem}
	\label{lem:no2covers}
	If $k$ is an algebraically closed field having characteristic not equal to $2$, then for any integer $n > 0$, the variety $\op{Q}_{2n}$ has no non-trivial \'etale $\Z/2$-torsors, i.e., $\op{H}^1_{\et}(\op{Q}_{2n},\Z/2) = 0$.
\end{lem}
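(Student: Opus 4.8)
The plan is to show that $\op{Q}_{2n}$, over an algebraically closed field $k$ with $\operatorname{char} k \neq 2$, is $\aone$-homotopy equivalent to (or at least has the same low-degree étale cohomology as) a simply connected space, so that $\op{H}^1_{\et}(\op{Q}_{2n},\Z/2)$ vanishes. When $2$ is invertible, $\op{Q}_{2n} \cong \op{S}_{2n}$ is the affine quadric $q_{2n+1} = 1$, which is the classical model of the motivic sphere $\op{S}^{2n,n}$; concretely, over $\cplx$ its complex points are homotopy equivalent to the topological sphere $S^{2n}$ (deformation retract onto the real sphere in the real form), and over a general algebraically closed $k$ the relevant comparison still gives simple connectivity. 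So the first step is to reduce to the statement "$\op{S}_{2n}$ is simply connected as a smooth $k$-variety for the relevant (topological or étale) fundamental group."

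The cleanest route I would take is cohomological and avoids $\pi_1$ altogether. Recall $\op{H}^1_{\et}(-, \Z/2)$ classifies connected degree-two étale covers; equivalently it is $\Hom(\pi_1^{\et}, \Z/2)$. First I would verify $\op{S}_{2n}$ is connected (it is a smooth affine quadric of dimension $2n \geq 2$ over an algebraically closed field, hence geometrically irreducible — e.g.\ the split quadric form makes this a standard computation). Then I would compute $\op{H}^1_{\et}(\op{S}_{2n}, \Z/2)$ directly. One natural way: $\op{S}_{2n} \cong \op{SO}_{2n+1}/\op{SO}_{2n}$ (available from Theorem~\ref{thmintro:main} when $2$ is invertible, via \cite{AHWII}), so the fibration $\op{SO}_{2n} \to \op{SO}_{2n+1} \to \op{S}_{2n}$ yields a long exact sequence in étale cohomology (or the homotopy exact sequence of the torsor), and since $\op{SO}_{2n} \hookrightarrow \op{SO}_{2n+1}$ induces an isomorphism on $\pi_0^{\et}$ and a surjection on $\pi_1^{\et}$ (both groups are connected, and $\pi_1^{\et}(\op{SO}_m) = \Z/2$ for $m \geq 3$ with the inclusion inducing an iso for $m\geq 4$), one gets $\pi_1^{\et}(\op{S}_{2n}) = 0$, hence $\op{H}^1_{\et}(\op{S}_{2n},\Z/2) = 0$. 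For small $n$ one checks the exceptional cases ($\op{S}_2 \cong \op{SL}_2/\gm{}$ etc.) separately, or notes $\op{S}_2$ is $\A^1 \setminus 0$... wait, it is a nondegenerate conic minus nothing — one handles $n = 1$ by hand, where $\op{Q}_2 \cong \op{SL}_2/\gm{}$ is simply connected.

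The main obstacle I anticipate is getting the $\pi_1^{\et}$ computation of $\op{SO}_m$ and the behavior of the stabilization map $\op{SO}_{2n} \hookrightarrow \op{SO}_{2n+1}$ on fundamental groups cleanly in a characteristic-independent way (for $\operatorname{char} \neq 2$, where $\op{SO}_m$ is a connected semisimple group with fundamental group $\mu_2$, this is standard — the $\pi_1$ in the étale sense is the algebraic fundamental group, and $\pi_1^{\et}(\op{SO}_m) \cong \Z/2$ for $m \geq 3$). An alternative, perhaps more elementary, route is to use the long exact homotopy sequence of the $\op{SO}_{2n}$-torsor $\op{SO}_{2n+1} \to \op{Q}_{2n}$ from Theorem~\ref{thmintro:main} directly: the segment $\pi_1^{\et}(\op{SO}_{2n}) \to \pi_1^{\et}(\op{SO}_{2n+1}) \to \pi_1^{\et}(\op{Q}_{2n}) \to \pi_0^{\et}(\op{SO}_{2n}) = 0$ shows $\pi_1^{\et}(\op{Q}_{2n})$ is a quotient of $\pi_1^{\et}(\op{SO}_{2n+1}) = \Z/2$ by the image of $\pi_1^{\et}(\op{SO}_{2n}) = \Z/2$, and this image is all of $\Z/2$ because the inclusion $\op{SO}_{2n} \hookrightarrow \op{SO}_{2n+1}$ is the standard one, which on maximal tori / coroot lattices is the identity on the relevant $\mu_2$. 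Hence $\pi_1^{\et}(\op{Q}_{2n}) = 0$ and $\op{H}^1_{\et}(\op{Q}_{2n}, \Z/2) = \Hom(\pi_1^{\et}(\op{Q}_{2n}), \Z/2) = 0$, as claimed. I would present this second route, as it keeps everything inside the $\op{SO}$-world already set up in this section.
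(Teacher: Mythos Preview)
Your argument is essentially correct but takes a genuinely different route from the paper. The paper does not touch $\pi_1^{\et}$ at all: it uses the Kummer sequence (valid since $\operatorname{char} k \neq 2$) to identify $\op{H}^1_{\et}(\op{Q}_{2n},\Z/2)$ with the $2$-torsion in $\Pic(\op{Q}_{2n})$, and then computes $\Pic(\op{Q}_{2n})$ directly by induction on $n$ via the geometric stratification of $\op{Q}_{2n}$ into an open $\A^{2n-1}\times\gm{}$ and a closed $\op{Q}_{2n-2}\times\aone$, obtaining $\Pic = \Z$ for $n=1$ and $\Pic = 0$ for $n>1$. This is entirely elementary and uses nothing about orthogonal groups.

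Your approach, via the homotopy exact sequence of the torsor $\op{SO}_{2n}\to\op{SO}_{2n+1}\to\op{Q}_{2n}$ and the computation $\pi_1^{\et}(\op{SO}_m)\cong\Z/2$ for $m\geq 3$, is cleaner conceptually and fits the spirit of the surrounding section, but you must be careful about circularity. In this paper, Lemma~\ref{lem:no2covers} is invoked \emph{inside} the proof of Theorem~\ref{thm:quadricsinchar2} (to pin down the stabilizer), so you cannot cite Theorem~\ref{thmintro:main} for the torsor description. You noticed this in your first route and pointed to \cite{AHWII} for the $\operatorname{char}\neq 2$ identification $\op{Q}_{2n}\cong\op{SO}_{2n+1}/\op{SO}_{2n}$, which is fine; but in your preferred ``second route'' you again write ``from Theorem~\ref{thmintro:main} directly,'' which is circular as stated. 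If you present this argument, cite \cite[Lemma~3.1.7]{AHWII} throughout, not Theorem~\ref{thmintro:main}. The paper's Kummer/Picard argument has the advantage of sidestepping this issue completely and requiring no input about $\pi_1^{\et}$ of semisimple groups or exact sequences for \'etale homotopy of torsors; your argument has the advantage of explaining \emph{why} the vanishing holds in terms of the homogeneous-space structure already in play.
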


\begin{proof}
	Under the hypothesis on $k$, the Kummer sequence identifies $\op{H}^1_{\et}(\op{Q}_{2n},\Z/2)$ with the $2$-torsion subgroup of $\op{Pic}(\op{Q}_{2n})$.  A straightforward induction argument using the localization sequence and the fact that $\op{Q}_{2n}$ has an open subscheme isomorphic to ${\mathbb A}^{2n-1} \times \gm{}$ with closed complement $\op{Q}_{2n-2} \times \aone$ then identifies $\op{Pic}(\op{Q}_{2n})$ as $\Z$ if $n = 1$ and $0$ if $n > 1$.  In either case, it follows that the $2$-torsion subgroup is trivial.
\end{proof}

\begin{prop}
	\label{prop:transitivityI}
	Suppose $k$ is an infinite field and $n > 0$ is an integer.  The action of $\op{SO}_{2n+1}(k)$ on $\op{Q}_{2n}(k)$ is transitive.  
\end{prop}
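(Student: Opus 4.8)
The plan is to identify $\op{Q}_{2n}(k)$ explicitly as a set of vectors satisfying the conditions $q_{2n+2}=0$ and $t_{2n+2}=1$ (with the variable renaming of Lemma~\ref{lem:constructingtheaction}), and then to show that any such vector can be moved to the base-point $x_0$ by an element of $\op{SO}_{2n+1}(k)$, using reflections as the basic transitivity tool supplied by Lemma~\ref{lem:reflectiontransitivity}. Concretely, a point of $\op{Q}_{2n}$ in the $(k^{\oplus 2n+2},q_{2n+2},1)$-picture is a vector $v$ with $q_{2n+2}(v)=0$ and $B_{2n+2}(v,1)=1$; the base-point $x_0$ is one such vector. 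Since $\op{SO}_{2n+1}$ is the subgroup of $\op{O}_{2n+1}$ (itself the stabilizer of $1$ in $\op{GSO}_{2n+2}$) with trivial Dickson invariant, I want to move $v$ to $x_0$ by isometries of $q_{2n+2}$ that fix $1$ and whose Dickson invariant (computed in $\op{O}_{2n+1}$, i.e.\ on the orthogonal complement of $k\cdot 1$) is trivial.

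**First** I would observe that both $v$ and $x_0$ lie in the affine subspace $\{t_{2n+2}=1\}$, so $v - x_0$ lies in the \emph{linear} subspace $\{t_{2n+2}=0\}$, which is exactly $k\cdot 1 \oplus (k^{\oplus 2n+1})^{\perp \cap \cdots}$; more usefully, $v-x_0$ is orthogonal to $1$ after a short computation (since $B(v,1)=B(x_0,1)=1$ gives $B(v-x_0,1)=0$), so reflections in vectors of the form $v-x_0$ (when anisotropic) automatically fix $1$ and hence lie in $\op{O}_{2n+1}$. Then I would apply Lemma~\ref{lem:reflectiontransitivity} with $x = v$, $y = x_0$: since $q_{2n+2}(v) = q_{2n+2}(x_0) = 0$ — wait, these are both isotropic, so part (1) of the lemma requires $q_{2n+2}(v-x_0)\neq 0$, and part (2) handles the case $q_{2n+2}(v-x_0)=0$ by a product of two reflections in suitable vectors $w, w'$. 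In either case one gets an element of $\op{O}_{2n+1}(k)$ carrying $v$ to $x_0$; when $k$ is infinite one has enough room to choose the auxiliary vector $w$ in case (2) avoiding the finitely many bad loci, exactly as in the proof of Proposition~\ref{prop:oddorthogonalaction}.

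**The main obstacle** is the Dickson-invariant bookkeeping: Lemma~\ref{lem:reflectiontransitivity} produces an element of $\op{O}_{2n+1}$, but I need an element of $\op{SO}_{2n+1}$, i.e.\ one killed by $\chi\circ D$. Case (2) of the lemma produces a product of \emph{two} reflections, which is automatically in $\op{SO}_{2n+1}$ (each reflection has $D=1$ as recorded in \ref{entry:reflections}, so the product has $D=0$). Case (1) produces a \emph{single} reflection $r_{v-x_0}$, which has nontrivial Dickson invariant, so it does \emph{not} lie in $\op{SO}_{2n+1}$; I must correct this by composing with one further reflection $r_u$ in a vector $u$ orthogonal to both $1$ and $x_0$ (so that $r_u$ fixes $x_0$ and lies in $\op{O}_{2n+1}$) with $q_{2n+2}(u)\in k^\times$ — such $u$ exists because the orthogonal complement of $\langle 1, x_0\rangle$ contains a $2n$-dimensional split space with anisotropic vectors, using that $k$ is infinite. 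The composite $r_u \circ r_{v-x_0}$ then has trivial Dickson invariant, fixes $1$, and still sends $v$ to $x_0$, hence lies in $\op{SO}_{2n+1}(k)$ and witnesses transitivity.

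**Finally**, I would note the edge case $n=1$ (where $\op{Q}_2$ is $1$-dimensional) may need to be checked by hand or folded into the general argument by noting the relevant orthogonal complements are still large enough; and I would organize the write-up so that the infinite-field hypothesis is invoked only where genuinely needed — namely in selecting $w$ in Lemma~\ref{lem:reflectiontransitivity}(2) and in selecting the anisotropic $u$ above — which is why Theorem~\ref{thm:quadricsinchar2} separately invokes Lemma~\ref{lem:no2covers} and a spreading-out argument to handle finite base fields.
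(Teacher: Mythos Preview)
Your approach is correct in outline and uses the same reflection-based strategy as the paper, but it differs in how the determinant/Dickson bookkeeping is handled.  The paper dispatches the case $\operatorname{char}(k)\neq 2$ as classical (via the isomorphism $\op{Q}_{2n}\cong\{q_{2n+1}=1\}$) and then, in characteristic~$2$, uses the key simplification that $\op{SO}_{2n+1}(k)\to\op{O}_{2n+1}(k)$ is a \emph{bijection}; hence a single reflection already lands in $\op{SO}_{2n+1}(k)$ and no correction $r_u$ is needed in Case~1.  Your uniform-in-characteristic argument with the extra reflection is a valid alternative and has the mild advantage of not invoking the characteristic-$\neq 2$ case as a black box.

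Two points to tighten.  First, a terminological slip: in the odd-rank case the paper defines $\op{SO}_{2n+1}$ as $\ker(\chi\circ D)=\ker(\det)$, not $\ker(D)$, and Entry~\ref{entry:reflections} records $\det(r_v)=-1$ (not $D(r_v)=1$) for reflections in the odd case; your parity count is still correct once this is fixed.  Second, and more substantively, in your Case~2 you must impose the additional constraint $t(w)=B(w,1)=0$ when choosing the auxiliary vector $w$ of Lemma~\ref{lem:reflectiontransitivity}(2), so that $r_w$ fixes $1$ and hence lies in $\op{O}_{2n+1}$; one then checks $t(w')=0$ as well.  The paper carries this out explicitly, whereas your appeal to ``exactly as in the proof of Proposition~\ref{prop:oddorthogonalaction}'' is not quite enough, since that argument imposes no trace condition.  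With these two adjustments your proof goes through.
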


\begin{rem}
	The case $n = 0$ is exceptional because $\op{Q}_{0}$ is a disconnected scheme isomorphic to the disjoint union of two copies of $\Spec k$.  
\end{rem}

\begin{proof}
	If $k$ has characteristic not equal to $2$, then this fact is well-known, so let us assume that $k$ has characteristic $2$.  In that case, note that the map $\op{SO}_{2n+1}(k) \to \op{O}_{2n+1}(k)$ is a bijection.   Suppose $x \in \op{Q}_{2n}(k)$ is an arbitrary point not equal to $x_0$.  To establish the result, it therefore suffices to construct an element of $\op{O}_{2n+1}(k)$ that takes $x$ to $x_0$, which we will accomplish using explicit reflections.  For the remainder of this proof, we write $B$ for the bilinear form obtained by polarizing $q_{2n+2}$ and $t$ for the associated trace function $t_{2n+2}$. \newline
	
	\noindent {\bf Case 1}.  Suppose $q_{2n+2}(x - x_0) \neq 0$.  Since $t(x-x_0) = t(x) - t(x_0) = 0$, it follows that $x-x_0$ lies in the orthogonal complement of the linear space $k\cdot 1$.  In other words, the vector $x-x_0$ lies in the subspace $k^{\oplus 2n+1}$.  In that case, the reflection $r_{x-x_0}$ sends $x$ to $x_0$ (Lemma~\ref{lem:reflectiontransitivity}.1).  \newline
	
	\noindent {\bf Case 2.} Suppose $q_{2n+2}(x-x_0) = 0$.  We claim that we can choose $a$ such that $t(a) = 0$ and such that $q_{2n+2}(a) \neq 0$, $B(x,a) \neq 0$ and $B(x_0,a) \neq 0$.
	
	The locus of points where $t = 0$ corresponds to imposing the equation $x_{n+1} = x_{2n+2}$, which defines a closed subscheme of ${\mathbb A}^{2n+2}_k$ isomorphic to ${\mathbb A}^{2n+1}_k$.  The restriction of $q_{2n+2}$ to this subspace is $q_{2n+1}$.  In particular, the locus where $q_{2n+1} \neq 0$ is a principal open subset of ${\mathbb A}^{2n+1}_k$; this open subset is non-empty since $n \geq 1$ (the point $x_1 = x_{n+2} = 1$, $x_i = 0$ otherwise works).  
	
	Since $1 \in k^{\oplus 2n+2}$ has trace zero, and since $x$ and $x_0$ both lie on $\op{Q}_{2n}(k)$, we know that $1 = t(x_0) = B(x_0,1) = B(x,1) = t(x)$.  In other words, the Zariski open subsets of the hypersurface $t = 0$ defined by intersection with $B(x,-) \neq 0$ or $B(x_0,-) \neq 0$ are themselves non-empty. Since $k$ is infinite, their intersection is non-empty and likewise the intersection with the open subset $\{q_{2n+1} \neq 0\}$ is non-empty.  
	
	Set $a' = x - r_{a}(x_0)$. In that case, 
	\[
	t(a') = t(x) - t(r_a(x_0)) = q(a)^{-1}B(a,1)t(a) = 0
	\]
	as well.  The composite $r_a \circ r_{a'}$ takes $x$ to $x_0$ (by Lemma~\ref{lem:reflectiontransitivity}.2) as required.
	
	\end{proof}

	Over fields of positive characteristic there are alternative group-theoretic arguments for transitivity, once the action above has been defined and the stabilizer computed.  The following argument for transitivity was suggested by R. Guralnick.  

\begin{prop}
	Over any algebraically closed field $k$ of positive characteristic, the action of $\op{SO}_{2n+1}(k)$ on $\op{Q}_{2n}(k)$ is transitive.
\end{prop}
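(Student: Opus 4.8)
The plan is to replace the explicit reflection juggling of Proposition~\ref{prop:transitivityI} by a characteristic-free dimension count, in the spirit of orbit--stabilizer theory. The idea is to show that \emph{every} $\op{SO}_{2n+1}$-orbit in $\op{Q}_{2n}$ has dimension exactly $2n=\dim\op{Q}_{2n}$; since $\op{Q}_{2n}$ is irreducible, each orbit is then dense, hence open, and since $\op{Q}_{2n}$ is connected this forces a single orbit.

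First I would assemble the numerical inputs, all insensitive to the characteristic. The group $\op{SO}_{2n+1}$ is connected and smooth of dimension $n(2n+1)$; the orthogonal group of any regular quadratic space of rank $2n$ over $k$ is smooth of dimension $\binom{2n}{2}=n(2n-1)$; and $\op{Q}_{2n}$ is smooth of pure dimension $2n$ and connected, the latter being visible from the stratification in which $\op{Q}_{2n}$ has an open subscheme isomorphic to $\A^{2n-1}\times\Gm$ with closed complement $\op{Q}_{2n-2}\times\aone$ of dimension $2n-1<2n$ (so a smooth hypersurface cannot acquire a component disjoint from the connected open stratum). Being smooth and connected, $\op{Q}_{2n}$ is then irreducible of dimension $2n$.

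The one place the geometry of $\op{Q}_{2n}$ genuinely enters is a uniform bound $\dim\op{Stab}_{x}\le n(2n-1)$, valid for every $x\in\op{Q}_{2n}(k)$, where $\op{Stab}_{x}\subseteq\op{SO}_{2n+1}$ is the (scheme-theoretic) stabilizer. Using the model of Lemma~\ref{lem:constructingtheaction}, a point $x$ is a vector in $k^{\oplus 2n+2}$ with $q_{2n+2}(x)=0$ and $t_{2n+2}(x)=B_{2n+2}(x,1)=1$; since $\op{SO}_{2n+1}$ fixes $1$, the subgroup $\op{Stab}_{x}$ fixes the subspace $W:=k\cdot x+k\cdot 1$ pointwise. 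One checks that $\dim W=2$ (if $x=c\cdot 1$ then $q_{2n+2}(x)=c^{2}q_{2n+2}(1)=c^{2}=0$ forces $x=0$, contradicting $t_{2n+2}(x)=1$) and that $B_{2n+2}|_{W}$ is nondegenerate: in the basis $\{x,1\}$ one has $B_{2n+2}(x,x)=2q_{2n+2}(x)=0$, $B_{2n+2}(1,1)=2q_{2n+2}(1)=2$ and $B_{2n+2}(x,1)=1$, so the Gram determinant is $-1$, a unit regardless of the characteristic. Hence $k^{\oplus 2n+2}=W\perp W^{\perp}$ with $(W^{\perp},q_{2n+2}|_{W^{\perp}})$ a regular quadratic space of rank $2n$, and restriction to $W^{\perp}$ defines a closed immersion $\op{Stab}_{x}\hookrightarrow\op{O}(W^{\perp},q_{2n+2}|_{W^{\perp}})$, since an isometry fixing $W$ pointwise and acting trivially on $W^{\perp}$ is the identity. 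Therefore $\dim\op{Stab}_{x}\le n(2n-1)$.

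Putting these together: by the orbit--dimension formula each orbit $\op{SO}_{2n+1}\cdot x$ has dimension $n(2n+1)-\dim\op{Stab}_{x}\ge 2n$, and being contained in $\op{Q}_{2n}$ it has dimension exactly $2n$. An orbit is always open in its closure, and an irreducible closed subset of dimension $2n$ in the irreducible $2n$-dimensional variety $\op{Q}_{2n}$ is everything; hence each orbit is dense and therefore open in $\op{Q}_{2n}$. The orbits partition the connected scheme $\op{Q}_{2n}$ into disjoint nonempty open subsets, so there is exactly one orbit, i.e.\ the action is transitive. The points that need care --- and where I would be explicit --- are that each step is genuinely independent of the characteristic (nondegeneracy of $W$, and the dimension $n(2n-1)$ of the orthogonal group of a rank-$2n$ regular form), and that $\op{Q}_{2n}$ is connected in characteristic $2$; both are elementary, and there is no serious obstacle beyond bookkeeping.
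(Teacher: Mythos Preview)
Your argument is correct and takes a genuinely different route from the paper's.  The paper proceeds arithmetically: it invokes \cite[Proposition 1.1]{GLMS} to reduce to $k=\overline{\mathbb{F}_p}$, then checks transitivity over every finite field by a point count, computing $|\op{Q}_{2n}(\mathbb{F}_q)|=q^{2n}+q^n$ via the stratification $\op{Q}_{2n}\supset\mathbb{A}^{2n-1}\times\Gm$ with complement $\op{Q}_{2n-2}\times\aone$, and matching this against $|\op{SO}_{2n+1}(\mathbb{F}_q)|/|\op{SO}_{2n}(\mathbb{F}_q)|$ from Steinberg's order formulas.  Your approach is instead purely geometric: you bound $\dim\op{Stab}_x$ uniformly by embedding it in $\op{O}(W^{\perp})$, use orbit--stabilizer to force every orbit to have dimension $2n$, and then conclude by irreducibility and connectedness of $\op{Q}_{2n}$.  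The key observation enabling your bound---that the Gram matrix of $B_{2n+2}$ on $W=k\cdot x+k\cdot 1$ has determinant $-1$ regardless of characteristic---is clean and exactly the sort of thing one must check.  Your argument has the advantage of being characteristic-free (so it gives an alternative proof of Proposition~\ref{prop:transitivityI} over algebraically closed fields as well) and self-contained, avoiding the external citation to \cite{GLMS} and the finite-group order formulas; the paper's argument, by contrast, is pleasantly concrete and yields the point count as a byproduct, but is intrinsically tied to positive characteristic.
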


\begin{proof}
	To check transitivity over a given algebraically closed field, it suffices to check transitivity over an algebraically closed subfield by \cite[Proposition 1.1]{GLMS}.  In that case we may assume that $k$ is the algebraic closure of a finite field.  It then suffices to check transitivity over any finite field, and we do this by counting points.  We can compute the number of points of $\op{Q}_{2n}$ over a finite field inductively.  Indeed, for every $n \geq 1$, the scheme $\op{Q}_{2n}$ has an open subscheme isomorphic to ${\mathbb A}^{2n-1} \times \gm{}$ with closed complement $\op{Q}_{2n-2} \times \aone$.  Since $\op{Q}_0 = \Spec k \sqcup \Spec k$, one sees immediately that $|\op{Q}_2({\mathbb F}_q)| = q^2 + q$.  The decomposition above gives the recursive formula
	\[
	|\op{Q}_{2n}({\mathbb F}_q)| = q^{2n-1}(q-1) + q(|\op{Q}_{2n-2}({\mathbb F}_q)|),
	\]
	and a straightforward induction argument allows one to conclude that $|\op{Q}_{2n}({\mathbb F}_q)| = q^{2n}+q^n$.  
	
	On the other hand, the formulas for the order of the special orthogonal group over a finite field \cite[Theorem 25 on p. 77]{Steinberg} allow one to compute that $|\op{SO}_{2n+1}({\mathbb F}_q)/\op{SO}_{2n}({\mathbb F}_q)| = q^{2n} + q^n$ as well.  It follows immediately that the action of $\op{SO}_{2n+1}({\mathbb F}_q)$ on $\op{Q}_{2n}({\mathbb F}_q)$ is transitive for every $q = p^n$, and thus transitivity holds after passing to $\overline{{\mathbb F}_q}$ as well; the result follows.
\end{proof}

\section{Applications}
\label{s:consequences}
In this section, we deduce a selection of consequences of Theorem~\ref{thm:quadricsinchar2}.  The results of this section assume familiarity with motivic homotopy theory, in particular, the results of \cite{AHW,AHWII,AHWIII}.  

\subsection{Affine representability}
For the convenience of the reader, we recall that $\Sm_k$ is the category of smooth $k$-schemes, $\Sm_k^{\aff}$ is the subcategory of $\Sm_k$ consisting of affine schemes, and $\op{sPre}(\op{Sm}_k)$ is the category of simplicial presheaves on $\op{Sm}_k$.  If $t$ is a Grothendieck topology on $\Sm_k$, then $R_t$ is a fibrant replacement functor for the injective $t$-local model structure on $\op{sPre}(\op{Sm}_k)$ (see \cite[\S 3.1]{AHW} for more details), while $\Singaone$ is the singular construction (see \cite[\S 4.1]{AHW}).  We write $\op{Ho}(k)$ for the Morel--Voevodsky $\aone$-homotopy category as discussed in \cite[\S 5]{AHW} and for $X,Y \in \Sm_k$, we write $[X,Y]_{\aone}$ for $\hom_{\op{Ho}(k)}(X,Y)$.  First, we establish Theorem~\ref{thmintro:aonenaive} about affine representability of $\op{Q}_{2n}$ over any field.  

\begin{cor}
	\label{cor:affinerepresentability}
Assume $k$ is a field.
\begin{enumerate}[noitemsep,topsep=1pt]
\item The simplicial presheaf $R_{\Zar} \Singaone \op{Q}_{2n}$ is Nisnevich local and $\aone$-invariant.
\item If $X \in \Sm_k^{\aff}$, then the canonical map
\[
\pi_0(\Singaone \op{Q}_{2n}(X)) \longrightarrow [X,\op{Q}_{2n}]_{\aone}
\]
is an isomorphism.
\end{enumerate}
\end{cor}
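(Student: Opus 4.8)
The plan is to deduce both statements from the affine representability machinery of \cite{AHW,AHWII}, using Theorem~\ref{thm:quadricsinchar2} to verify the one geometric input that is needed. The key observation is that $\op{Q}_{2n}$ is, by Theorem~\ref{thm:quadricsinchar2}, the quotient $\op{SO}_{2n+1}/\op{SO}_{2n}$ with the associated $\op{SO}_{2n}$-torsor $\op{SO}_{2n+1}\to\op{Q}_{2n}$ being Zariski locally trivial. Combined with the fact that $\op{SO}_{2n+1}$ and $\op{SO}_{2n}$ are split reductive (in particular isotropic, assuming $n\geq 1$) $k$-group schemes, this places us squarely in the situation where the results of \cite{AHWII} on homogeneous spaces apply: the relevant statement is that $\op{Q}_{2n}$ satisfies affine Nisnevich-excision and affine $\aone$-invariance as a presheaf on $\Sm_k^{\aff}$, because these properties hold for $\op{SO}_{m}$-torsors (this is where one invokes the representability results for torsors under isotropic reductive groups, e.g. \cite[Theorem 3.3.6, Theorem 4.1.3]{AHWII} together with the Zariski-local triviality just established).

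First I would record that $\op{SO}_{2n+1}$ and $\op{SO}_{2n}$ are split reductive over $k$ (hence over $\Z$, by Theorem~\ref{thm:quadricsinchar2} all the relevant schemes descend to $\Z$), and that the classifying space construction applies: the presheaf $X\mapsto H^1_{\Nis}(X,\op{SO}_m)$ restricted to $\Sm_k^{\aff}$ satisfies affine Nisnevich excision and is $\aone$-invariant on affines. Second, using the Zariski-local (a fortiori Nisnevich-local) triviality of $\op{SO}_{2n+1}\to\op{Q}_{2n}$ from Theorem~\ref{thm:quadricsinchar2}(2), the fiber sequence $\op{SO}_{2n}\to\op{SO}_{2n+1}\to\op{Q}_{2n}$ identifies $\Singaone\op{Q}_{2n}$, up to the relevant local equivalences, with the homotopy fiber of a map of classifying spaces; concretely, for $X\in\Sm_k^{\aff}$ one gets an exact sequence relating $\pi_0(\Singaone\op{Q}_{2n}(X))$ to $\op{SO}_{2n}$- and $\op{SO}_{2n+1}$-torsors on $X$, and the affine representability of the latter two transfers to $\op{Q}_{2n}$. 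This yields Point (1): $R_{\Zar}\Singaone\op{Q}_{2n}$ is Nisnevich local and $\aone$-invariant. Third, Point (2) is then formal: once $R_{\Zar}\Singaone\op{Q}_{2n}$ is Nisnevich local and $\aone$-invariant, it is already a model for the $\aone$-localization, so $\pi_0$ of its sections on a smooth affine $X$ computes $[X,\op{Q}_{2n}]_{\aone}$; this is the general mechanism of \cite[\S 5]{AHW}, specifically the statement that an $\aone$-naive space has naive homotopy classes agreeing with genuine ones.

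The main obstacle is the verification underlying Point (1): one must make sure that the affine representability results of \cite{AHWII} for homogeneous spaces of isotropic reductive groups genuinely apply in characteristic $p$ (including $p=2$) and over $\Z$, and in particular that the hypotheses there—isotropy of $\op{SO}_{2n+1}$ and $\op{SO}_{2n}$, and the Zariski-local triviality of the torsor—are met without the invertibility of $2$. The isotropy is immediate since these are split groups; the Zariski-local triviality is exactly Theorem~\ref{thm:quadricsinchar2}(2), which was the point of proving that part. So the real content is bookkeeping: checking that the cited theorems of \cite{AHWII} are stated (or can be reproved verbatim) over an arbitrary field, which they are, since their proofs only use smoothness and isotropy of the groups involved. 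Everything else—the passage from (1) to (2), and the final "naive = genuine" statement—is a direct appeal to \cite[Theorem 5.1.3 and its corollaries]{AHW} once (1) is in hand.
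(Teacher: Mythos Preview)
Your approach is essentially the same as the paper's: feed the homogeneous space description $\op{Q}_{2n}\cong\op{SO}_{2n+1}/\op{SO}_{2n}$ and the Zariski-local triviality of the torsor (Theorem~\ref{thm:quadricsinchar2}) into the affine representability machinery for homogeneous spaces of isotropic reductive groups. The paper's proof is a one-line appeal to \cite[Theorem~2.6]{AHWIII}, which packages exactly the chain of reductions you sketch.

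There is, however, one point in your write-up that is not quite right and is worth flagging. You assert that the results of \cite{AHWII} ``are stated (or can be reproved verbatim) over an arbitrary field,'' and that ``their proofs only use smoothness and isotropy of the groups involved.'' This is not accurate: the $\aone$-invariance of Nisnevich-locally trivial $G$-torsors on smooth affines in \cite{AHWII} ultimately rests on Grothendieck--Serre type input (Fedorov--Panin, etc.), which at the time of \cite{AHWII} was only available over \emph{infinite} fields. The extension to finite fields is precisely the content of \cite{AHWIII}; see the remark after Theorem~\ref{thmintro:aonenaive} in the introduction, which distinguishes the infinite and finite cases by citing \cite{AHWII} and \cite{AHWIII} respectively. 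So your argument as written has a gap over finite fields, and the correct reference for the black box you invoke is \cite[Theorem~2.6]{AHWIII} (or the specific results \cite[Theorem~2.4, Theorem~2.15]{AHWIII}) rather than \cite{AHWII} alone. Once that citation is fixed, your argument coincides with the paper's.
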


\begin{proof}
Combine Theorem~\ref{thm:quadricsinchar2} and \cite[Theorem 2.6]{AHWIII}.
\end{proof}

\begin{rem}
	In fact, it seems likely that Corollary~\ref{cor:affinerepresentability} will extend to the case $k = \Z$.  Indeed, this would follow immediately if one knew the Bass--Quillen conjecture for Nisnevich locally trivial $\op{SO}_{2n+1}$-torsors for smooth $\Z$-algebras, i.e., if for any smooth $\Z$-algebra $A$, and any integer $i \geq 0$, the map 
	\[
	H^1_{\Nis}(\Spec A,\op{SO}_{2n+1}) \longrightarrow H^1_{\Nis}(\Spec A[x_1,\ldots,x_i],\op{SO}_{2n+1})
	\]
	is an isomorphism.
\end{rem}

\subsection{Euler class groups and motivic stable cohomotopy}
In this section, we establish Theorem~\ref{thmintro:cohomotopy} from the introduction and a further application to weak Euler class groups.

\begin{proof}[Proof of Theorem~\ref{thmintro:cohomotopy}]
    That the set $[X,\op{Q}_{2n}]_{\aone}$ has a functorial abelian group structure under the stated hypotheses on the dimension of $X$ and $k$ is a consequence of \cite[Proposition 1.2.1]{AsokDasFasel2021} replacing appeal to \cite[Theorem 1.1.1]{AsokDasFasel2021} by appeal to Corollary~\ref{cor:affinerepresentability}.  The second statement then follows from \cite[Theorem 1.3.4]{AsokDasFasel2021}.
    
    For the third statement, we appeal to \cite[Theorem 3.1.13 and Remark 3.1.14]{AsokDasFasel2021}; the latter statement explains exactly how characteristic hypotheses enter the story.  The last statement then follows from \cite[Theorem 3.2.1]{AsokDasFasel2021} 
\end{proof}

Finally, we can also make some statements about weak Euler class groups.  Assume $k$ is an infinite field, and $X$ is a smooth affine $k$-scheme of dimension $d \geq 2$.  Let $Z_0(X)$ be the group of zero cycles on $X$ and $CI_0(X)$ the subgroup generated by reduced complete intersection ideals in $X$.  The quotient 
\[
\op{E}_0(X) := Z_0(X)/CI_0(X)
\] 
is usually known as the weak Euler class group after the work of Bhatwadekar and Sridharan.  There is a well-defined surjective homomorphism
\[
\op{E}_0(X) \longrightarrow CH_0(X)
\]
by \cite[Lemma 2.5]{BhatwadekarSridharan}.  

\begin{thm}[Asok, Fasel]
If $k$ is an infinite field and $X$ is a smooth affine $k$-scheme of dimension $d \geq 2$, then the map
\[
\op{E}_0(X) \longrightarrow CH_0(X)
\]
is an isomorphism.
\end{thm}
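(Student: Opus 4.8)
The statement is, modulo the work of \cite{AsokDasFasel2021}, the specialization $n=d$ of the comparison between Euler class groups and Chow--Witt groups, and the plan is to assemble a commutative square relating the ``oriented'' and ``unoriented'' versions of each and then run a short diagram chase; the one subtle point is the comparison of the kernels of the two forgetful maps.

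First I would fix notation. Write $\mathrm{E}^d(X)$ for the Bhatwadekar--Sridharan Euler class group \cite{BS}; forgetting local orientations gives a canonical surjection $\pi \colon \mathrm{E}^d(X) \twoheadrightarrow \mathrm{E}_0(X)$ whose kernel is, by construction, generated by the differences $(I,\omega)-(I,\omega')$ attached to a height-$d$ ideal $I$ carrying two orientations, i.e.\ by twisting an orientation by a unit of $A/I$. On the other side, since $\dim X = d$ we have $CH_0(X) = CH^d(X)$, and the forgetful map $\widetilde{CH}^d(X) \to CH^d(X)$ sits in the long exact sequence attached to $0 \to \mathbf{I}^{d+1} \to \mathbf{K}^{MW}_d \to \mathbf{K}^{M}_d \to 0$, so its kernel is the image of $H^d_{\Nis}(X,\mathbf{I}^{d+1})$; unwinding the Rost--Schmid complex, this image is generated by the classes $\langle u\rangle\cdot[x] - [x]$ for $x$ a codimension-$d$ point of $X$ and $u \in k(x)^{\times}$ --- again ``twisting a codimension-$d$ cycle by a unit''. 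Now, under the hypotheses of the theorem ($k$ infinite, $d \geq 2$), Theorem~\ref{thmintro:cohomotopy} applied with $n=d$ shows (via Point (4) for the Segre map and Point (2) for the Hurewicz map, the latter legitimate since $d \leq n$) that the composite
\[
\mathrm{E}^d(X) \xrightarrow{\ s\ } [X,\op{Q}_{2d}]_{\aone} \longrightarrow \widetilde{CH}^d(X)
\]
is an isomorphism, say $\psi$, and I would check that $\psi$ fits into a commutative square
\[
\begin{CD}
\mathrm{E}^d(X) @>{\pi}>> \mathrm{E}_0(X)\\
@V{\psi}V{\wr}V @VV{q}V\\
\widetilde{CH}^d(X) @>>> CH^d(X)
\end{CD}
\]
whose bottom arrow is the forgetful map and whose right-hand arrow $q$ is the surjection of the statement; commutativity amounts to the assertion that the motivic Euler class of an oriented complete intersection ideal refines its ordinary top Chern class, a compatibility built into the formalism of \cite{AsokDasFasel2021}.

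Granting the square, the argument finishes by a chase: $\pi$ and $q$ are surjective, so $q$ is an isomorphism as soon as $\psi(\ker\pi) = \ker(\widetilde{CH}^d(X) \to CH^d(X))$; the inclusion ``$\subseteq$'' is automatic from commutativity, and the reverse inclusion is precisely the assertion that $s$ sends each generator $(I,\omega)-(I,\omega')$ of $\ker\pi$ to a generator $\langle u\rangle[x]-[x]$ of $\ker(\widetilde{CH}^d \to CH^d)$. This last compatibility --- that the Segre-class isomorphism intertwines the ``change the orientation by a unit'' operations on the Bhatwadekar--Sridharan and Chow--Witt sides --- is the one place requiring genuine input from \cite{AsokDasFasel2021}, and I expect it to be the main obstacle; everything else is formal. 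As with Theorem~\ref{thmintro:cohomotopy}, the contribution of the present note is only that Corollary~\ref{cor:affinerepresentability} makes this argument available with no hypothesis on the characteristic of $k$.
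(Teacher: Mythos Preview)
The paper's own proof is a single sentence: ``Repeat the proof of \cite[Theorem 3.2.6]{AsokDasFasel2021} appealing to Theorem~\ref{thmintro:cohomotopy} instead of \cite[Theorem 3.2.1]{AsokDasFasel2021}.'' In other words, the argument is entirely imported from the cited paper, and the only novelty here is that Corollary~\ref{cor:affinerepresentability} removes the characteristic hypothesis. You identify this role of the present note correctly in your final sentence.

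Your proposal does more than the paper: you attempt to reconstruct the content of \cite[Theorem~3.2.6]{AsokDasFasel2021} itself via a commutative square comparing the forgetful maps $\mathrm{E}^d(X)\to\mathrm{E}_0(X)$ and $\widetilde{CH}^d(X)\to CH^d(X)$, matched along the isomorphism $\psi$ supplied by Theorem~\ref{thmintro:cohomotopy} at $n=d$. This is a natural strategy and plausibly close to what the cited argument does; the identification of both kernels as ``change the orientation by a unit'' classes is the right heuristic. One small logical slip: you say the reverse inclusion $\ker(\widetilde{CH}^d\to CH^d)\subseteq\psi(\ker\pi)$ amounts to ``$s$ sends each generator of $\ker\pi$ to a generator of the other kernel,'' but that assertion only yields the forward inclusion; what you actually need is that every generator $\langle u\rangle[x]-[x]$ on the Chow--Witt side is \emph{hit} by some element of $\ker\pi$ under $\psi$. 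The substance is the same --- $\psi$ should intertwine the two unit-twist operations --- but the direction of the argument is reversed. As you acknowledge, verifying this compatibility is the genuine content and must be extracted from \cite{AsokDasFasel2021}; the paper here simply defers to that source rather than reproving it.
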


\begin{proof}
Repeat the proof of \cite[Theorem 3.2.6]{AsokDasFasel2021} appealing to Theorem~\ref{thmintro:cohomotopy} instead of \cite[Theorem 3.2.1]{AsokDasFasel2021}.
\end{proof}

{\begin{footnotesize}
\raggedright
\bibliographystyle{alpha}
\bibliography{projectivespaces}
\end{footnotesize}}

\Addresses

\end{document}